\newcommand{\keywordsname}{Key words}
\newcommand{\keywords}[1]{%
\begin{@bstr@ctlist}
\hspace*{\abstitleskip}{\abstractnamefont\keywordsname\@bslabeldelim}\abstracttextfont\
#1%
\par\end{@bstr@ctlist}
}
\newcommand{\subjclassname}{Mathematics subject classification}
\newcommand{\subjclass}[2][2010]{%
\begin{@bstr@ctlist}
\hspace*{\abstitleskip}{\abstractnamefont\subjclassname\ (#1)\@bslabeldelim}\abstracttextfont\
#2%
\par\end{@bstr@ctlist}
}
\def\and{%				%begin{tabular}
	\end{tabular}%
	%\hskip 1em \@plus.17fil%
	and%
	\begin{tabular}[t]{c}}%
\def\thanks#1{%\footnotemark
\protected@xdef\@thanks{\@thanks
\protect\footnotetext[\the\c@footnote]{#1}}%
}
\let\addresses\@empty      %\let\thankses\@empty
\newcommand{\address}[2][]{\g@addto@macro\addresses{\address{#1}{#2}}}
\newcommand{\curraddr}[2][]{\g@addto@macro\addresses{\curraddr{#1}{#2}}}
\newcommand{\email}[2][]{\g@addto@macro\addresses{\email{#1}{#2}}}
\newcommand{\urladdr}[2][]{\g@addto@macro\addresses{\urladdr{#1}{#2}}}
\def\enddoc@text{%\ifx\@empty\@translators \else\@settranslators\fi
  \ifx\@empty\addresses \else\@setaddresses\fi}
\def\emailaddrname{e-mail}
\def\@setaddresses{\par
  \nobreak \begingroup
%\footnotesize
%
%---[...]
%
%  \def\\{\unskip, \ignorespaces}%
  \interlinepenalty\@M
  \def\address##1##2{\begingroup%
    \par\addvspace\bigskipamount%\indent
    \@ifnotempty{##1}{(\ignorespaces##1\unskip) }%
    {\noindent\ignorespaces##2}\par\endgroup}%
%
%---[...]
%
  \def\email##1##2{\begingroup
    \@ifnotempty{##2}{\nobreak\noindent\emailaddrname
      \@ifnotempty{##1}{, \ignorespaces##1\unskip}\/:\space
      \ttfamily##2\par}\endgroup}%
%
%---[...]
%
  \addresses
  \endgroup
}
\def\cstar#1{\expandafter\@cstar\csname c@#1\endcsname}
\def\@cstar#1{\ifcase#1\or $\ast$\or $\ast\ast$\or $\ast\ast\ast$\fi}
\AddEnumerateCounter{\cstar}{\@cstar}{$\ast\ast\ast$}
\newlist{conditions}{enumerate}{1}
\newlist{iconditions}{enumerate}{1}
\newlist{questions}{enumerate}{1}
\setlist[conditions]{label=\normalfont(\alph*),ref=\normalfont\alph*}
\setlist[iconditions]{label=\normalfont(\roman*),ref=\normalfont\roman*}
\setlist[questions]{label=\normalfont(Q\arabic*),ref=\normalfont Q\arabic*}
\newcommand{\rank}{\func{rank}}
\newcommand{\SC}{\mathcal{S}}
\newcommand{\Z}{\mathbb{Z}}
\newcommand{\R}{\mathbb{R}}
\newcommand{\C}{\mathcal{C}}
\newcommand{\F}{\mathbb{F}}
\newcommand{\HB}{\mathbb{H}}
\newcommand{\CB}{\mathbb{C}}
\newcommand{\SB}{\mathbb{S}}
\newcommand{\K}{\mathbb{K}}
\newcommand{\FC}{\mathcal{F}}
\newcommand{\VB}{\func{VB}}
\newcommand{\VBC}{\func{VB}_{\CB}}
\newcommand{\PB}{\mathbb{P}}
\newcommand{\cupproduct}{\mathbin{\smile}}
\newcommand{\Halg}{H_{\mathrm{alg}}}
\newcommand{\V}{\mathbb{V}}
\newcommand{\Spec}{\func{Spec}}
\newcommand{\HCalg}{H_{\CB\mhyphen\mathrm{alg}}}
\newcommand{\HCstr}{H_{\CB\mhyphen\mathrm{str}}}
\newcommand{\HCstreven}{H_{\CB\mhyphen\mathrm{str}}^{\mathrm{even}}}
\newcommand{\Heven}{H^{\mathrm{even}}}
\newcommand{\Hsph}{H_{\mathrm{sph}}}
\newcommand{\VBCstr}{\VB_{\CB\mhyphen\mathrm{str}}}
\newcommand{\ch}{\func{ch}}
\newcommand{\Q}{\mathbb{Q}}
\newcommand{\KF}{K_{\F}}
\newcommand{\KFstr}{K_{\F\mhyphen\mathrm{str}}}
\newcommand{\KFrk}{K_{\F}^{\mathrm{(rk)}}}
\newcommand{\KFcrk}{K_{\F}^{\mathrm{(crk)}}}
\newcommand{\GammaF}{\Gamma_{\F}}
\newcommand{\GammaC}{\Gamma_{\CB}}
\newcommand{\GammaR}{\Gamma_{\R}}
\newcommand{\KCstr}{K_{\CB\mhyphen\mathrm{str}}}
\newcommand{\rk}{{\normalfont{(rk)}}\xspace}
\newtheorem{theorem}{Theorem}[section]
\newtheorem{corollary}[theorem]{Corollary}
\newtheorem{proposition}[theorem]{Proposition}
\newtheorem{lemma}[theorem]{Lemma}
\theoremstyle{definition}
\newtheorem*{notation}{Notation}
\newtheorem{remark}[theorem]{Remark}
\theoremstyle{remark}
\newtheorem{case}{\indent Case}
\mathchardef\mhyphen="2D
\addcolon\linebreak[2]%
\addspace\texttt{\mkbibbrackets{\thefield{eprintclass}}}}}}
\addspace\texttt{\mkbibbrackets{\thefield{eprintclass}}}}}}
\title{\bf Comparison of stratified-algebraic and topological K-theory}
\date{}
\author{Wojciech Kucharz\thanks{The first author was partially supported by
the National Science Centre (Poland), under grant number
2014/15/B/ST1/00046. He also acknowledges with gratitude support and
hospitality of the Max--Planck--Institut f\"ur Mathematik in Bonn.} \and Krzysztof
Kurdyka\thanks{The second author was partially supported by ANR (France)
grant STAAVF.}}
\address{Wojciech Kucharz\\Institute of Mathematics\\Faculty of Mathematics and Computer
Science\\Jagiellonian University\\ul.~\L{}ojasiewicza 6\\30-348
Krak\'ow\\Poland}
\email{Wojciech.Kucharz@im.uj.edu.pl}
\address{Krzysztof Kurdyka\\Laboratoire de Math\'ematiques\\UMR 5175 du
CNRS\\Universit\'e de Savoie\\Campus Scientifique\\73 376 Le
Bourget-du-Lac Cedex\newline France}
\email{kurdyka@univ-savoie.fr}
\begin{document}
\maketitle
\thispagestyle{empty}

\begin{abstract}
Stratified-algebraic vector bundles on real algebraic varieties have
many desirable features of algebraic vector bundles but are more
flexible. We give a characterization of the compact real algebraic
varieties $X$ having the following property: There exists a positive
integer $r$ such that for any topological vector bundle $\xi$ on $X$,
the direct sum of $r$ copies of $\xi$ is isomorphic to a
stratified-algebraic vector bundle. In particular, each compact real
algebraic variety of dimension at most $8$ has this property. Our
results are expressed in terms of K-theory.
\end{abstract}

\keywords{Real algebraic variety, stratification, stratified-algebraic
vector bundle, stratified-regular map.}

\subjclass{14P25, 14F25, 19A49, 57R22.}

\section{Introduction and main results}\label{sec-1}

In the recent paper \cite{bib30}, we introduced and investigated
stratified-algebraic vector bundles on real algebraic varieties.
They occupy an intermediate position between algebraic and
topological vector bundles. Here we continue the line of research
undertaken in \cite{bib30, bib29} and look for new relationships
between stratified-algebraic and topological vector bundles. In a
broader context, the present paper is also closely related to
\cite{bib5, bib16, bib23, bib24, bib26, bib27, bib28}. All
results announced in this section are proved in
Section~\ref{sec-2}.

Throughout this paper the term \emph{real algebraic variety}
designates a locally ringed space isomorphic to an algebraic
subset of $\R^N$, for some $N$, endowed with the Zariski topology
and the sheaf of real-valued regular functions (such an object is
called an affine real algebraic variety in \cite{bib7}). The
class of real algebraic varieties is identical with the class of
quasi-projective real varieties, cf. \cite[Proposition~3.2.10,
Theorem~3.4.4]{bib7}. Morphisms of real algebraic varieties are
called \emph{regular maps}. Each real algebraic variety carries
also the Euclidean topology, which is induced by the usual metric
on $\R$. Unless explicitly stated otherwise, all topological
notions relating to real algebraic varieties refer to the
Euclidean topology.

Let $\F$ stand for $\R$, $\CB$ or $\HB$ (the quaternions). All
$\F$-vector spaces will be left $\F$-vector spaces. When
convenient, $\F$ will be identified with $\R^{d(\F)}$, where
\begin{equation*}
d(\F) = \dim_{\R} \F.
\end{equation*}

Let $X$ be a real algebraic variety. For any nonnegative integer
$n$, let $\varepsilon_X^n(\F)$ denote the standard trivial
$\F$-vector bundle on $X$ with total space $X \times \F^n$, where
$X \times \F^n$ is regarded as a real algebraic variety. An
algebraic $\F$-vector bundle on $X$ is an algebraic $\F$-vector
subbundle of $\varepsilon_X^n(\F)$ for some $n$ (cf.
\cite[Chapters~12 and~13]{bib7} for various characterizations of
algebraic $\F$-vector bundles).

We now recall the fundamental notion introduced in \cite{bib30}.
By a \emph{stratification} of $X$ we mean a finite collection
$\SC$ of pairwise disjoint Zariski locally closed subvarieties
whose union is $X$. Each subvariety in $\SC$ is called a stratum
of $\SC$. A \emph{stratified-algebraic $\F$-vector bundle on $X$}
is a topological $\F$-vector subbundle $\xi$ of
$\varepsilon_X^n(\F)$, for some $n$, such that for some
stratification $\SC$ of $X$, the restriction $\xi|_S$ of $\xi$ to
each stratum $S$ of $\SC$ is an algebraic $\F$-vector subbundle
of $\varepsilon_S^n(\F)$.

A topological $\F$-vector bundle $\xi$ on $X$ is said to
\emph{admit an algebraic structure} if it is isomorphic to an
algebraic $\F$-vector bundle on $X$. Similarly, $\xi$ is said to
\emph{admit a stratified-algebraic structure} if it is isomorphic
to a stratified-algebraic $\F$-vector bundle on $X$. These two
types of $\F$-vector bundles have been extensively investigated
in \cite{bib3, bib4, bib6, bib7, bib9, bib10, bib11, bib13} and
\cite{bib30, bib29}, respectively. In general, their behaviors
are quite different, cf. \cite[Example~1.11]{bib30}. Here we
further develop the direction of research initiated in
\cite{bib30, bib29}. It is convenient to bring into play
Grothendieck groups.

Denote by $\KF(X)$ the Grothendieck group of topological
$\F$-vector bundles on $X$. For any topological $\F$-vector
bundle $\xi$ on $X$, let $\llbracket \xi \rrbracket$ denote its
class in $\KF(X)$. Since $X$ has the homotopy type of a compact
polyhedron \cite[pp.~217, 225]{bib7}, it follows that the abelian
group $\KF(X)$ is finitely generated (cf.
\cite[Exercise~III.7.5]{bib21} or the spectral sequence in
\cite{bib2, bib15}). Let $\KFstr(X)$ be the subgroup of $\KF(X)$
generated by the classes of all $\F$-vector bundles admitting a
stratified-algebraic structure.

If the variety $X$ is compact, then the group $\KFstr(X)$
contains complete information on $\F$-vector bundles on $X$
admitting a stratified-algebraic structure. More precisely, we
have the following.

\begin{theorem}[{\cite[Corollary~3.14]{bib30}}]\label{th-1-1}
Let $X$ be a compact real algebraic variety. A topological
$\F$-vector bundle $\xi$ on $X$ admits a stratified-algebraic
structure if and only if the class $\llbracket \xi \rrbracket$ is
in $\KFstr(X)$.
\end{theorem}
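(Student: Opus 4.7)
The \emph{only if} direction is immediate from the definition: if $\xi$ is isomorphic to a stratified-algebraic bundle $\eta$, then $\llbracket \xi \rrbracket = \llbracket \eta \rrbracket$, and $\llbracket \eta \rrbracket \in \KFstr(X)$ by construction. So the whole content lies in the \emph{if} direction, which I would attack as follows.

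Suppose $\llbracket \xi \rrbracket \in \KFstr(X)$. Then one can write $\llbracket \xi \rrbracket = \llbracket \alpha \rrbracket - \llbracket \beta \rrbracket$ for stratified-algebraic bundles $\alpha, \beta$ (group those terms with positive coefficients into $\alpha$ and those with negative coefficients into $\beta$, using that a direct sum of stratified-algebraic bundles is again stratified-algebraic). This means there is a topological bundle $\eta$ with $\xi \oplus \beta \oplus \eta \cong \alpha \oplus \eta$. Because $X$ is compact (hence has the homotopy type of a finite polyhedron), $\eta$ admits a topological complement $\eta'$ with $\eta \oplus \eta' \cong \varepsilon_X^n(\F)$ for some $n$. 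Observing that $\varepsilon_X^n(\F)$ is trivially stratified-algebraic, we replace $\alpha$ and $\beta$ by $\alpha \oplus \varepsilon_X^n(\F)$ and $\beta \oplus \varepsilon_X^n(\F)$, and obtain a genuine topological isomorphism
\begin{equation*}
\xi \oplus \beta' \;\cong\; \alpha'
\end{equation*}
with $\alpha', \beta'$ both stratified-algebraic. So the problem reduces to the following claim: \emph{every topological direct summand of a stratified-algebraic $\F$-vector bundle on a compact real algebraic variety $X$ admits a stratified-algebraic structure}.

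To prove this claim, let $p \in \operatorname{End}_{\mathrm{top}}(\alpha')$ be the topological idempotent whose image is (isomorphic to) $\xi$ and whose kernel is $\beta'$. The strategy is to produce a stratified-algebraic idempotent $q \in \operatorname{End}(\alpha')$ sufficiently close to $p$ in the $C^0$-topology: any such $q$ will be fibrewise conjugate to $p$, and hence $\operatorname{im}(q)$ will be isomorphic as a topological $\F$-vector bundle to $\xi$, while inheriting a stratified-algebraic structure as a subbundle of the stratified-algebraic bundle $\alpha'$. To pass from an approximation to a true idempotent, one uses the standard holomorphic functional calculus trick: if $\tilde q$ is a stratified-algebraic endomorphism of $\alpha'$ close to $p$, then the spectrum of each fibre $\tilde q_x$ is clustered near $\{0,1\}$, and the contour integral $q = \tfrac{1}{2\pi i}\oint_\Gamma (z - \tilde q)^{-1}\,dz$ around a small loop enclosing $1$ produces a genuine idempotent whose construction is stratified-algebraic wherever $\tilde q$ is (it being obtained from $\tilde q$ by rational operations that preserve stratified-algebraicity stratum by stratum).

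The main obstacle, therefore, is the approximation step: \emph{stratified-algebraic endomorphisms of a stratified-algebraic bundle are dense in topological endomorphisms}, when $X$ is compact. This is a stratified analogue of the algebraic approximation theorems of Bochnak--Kucharz and should be proved by induction on the number of strata, using a stratification $\SC$ compatible with $\alpha'$. On each stratum $S$, the algebraic bundle $\alpha'|_S$ admits algebraic approximations of topological sections (its endomorphism bundle is again algebraic on $S$), and the inductive step glues an approximation defined on an open stratum to one defined on the union of lower strata; the compactness of $X$ is what makes the resulting global section genuinely close to $p$ in the $C^0$-norm. Once this density is in hand, the two preceding paragraphs complete the proof.
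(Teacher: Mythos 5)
First, a point of comparison: this paper does not actually prove Theorem~\ref{th-1-1}; it is quoted from \cite[Corollary~3.14]{bib30}, so your argument can only be measured against the proof given there. Your ``only if'' direction and your reduction are correct: from $\llbracket \xi \rrbracket \in \KFstr(X)$ you legitimately arrive at a topological isomorphism $\xi \oplus \beta' \cong \alpha'$ with \emph{both} $\alpha'$ and $\beta'$ stratified-algebraic. The fatal step is the next one. The claim you reduce to --- that \emph{every} topological direct summand of a stratified-algebraic bundle on a compact variety admits a stratified-algebraic structure --- silently discards the hypothesis that the complement $\beta'$ is stratified-algebraic, and without that hypothesis the claim is false. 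On a compact variety every topological $\F$-vector bundle is a direct summand of some trivial bundle $\varepsilon_X^N(\F)$, which is certainly stratified-algebraic; so your claim would force $\KFstr(X) = \KF(X)$ for every compact $X$, contradicting \cite[Example~7.10]{bib30} (cited in Section~\ref{sec-1}), which gives varieties with $\GammaF(X) \neq 0$. The same phenomenon is classical in the algebraic category: $\lambda \oplus \lambda^{-1}$ is topologically trivial for any $\CB$-line bundle $\lambda$ on a surface, yet $\lambda$ need not admit an algebraic structure.

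Consistently with this, your mechanism must break somewhere, and it does at the functional-calculus step. The density statement is true (though it is proved more cleanly not by induction on strata but by extending an endomorphism of $\alpha'$ to one of the ambient $\varepsilon_X^n(\F)$ via the orthogonal projection, approximating the resulting matrix of continuous functions by regular ones, and compressing back). However, the Riesz integral $q = \tfrac{1}{2\pi i}\oint_\Gamma (z-\tilde q)^{-1}\,dz$ is \emph{not} a rational operation on $\tilde q$: fibrewise it is an algebraic but non-regular function of the matrix entries (already for $2\times 2$ matrices it involves the square root of the discriminant of the characteristic polynomial), so it does not preserve algebraicity on strata, and no rational substitute can exist since that would prove the false claim above. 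The correct route --- essentially the one in \cite{bib30} --- keeps $\beta'$ in play: approximate the topological monomorphism $\beta' \to \alpha'$ by a stratified-algebraic morphism $g$ (possible precisely because \emph{both} bundles are stratified-algebraic and $X$ is compact); for $g$ sufficiently close the image $g(\beta')$ is a stratified-algebraic subbundle of $\alpha'$ isotopic to the original copy of $\beta'$, and a stratified-algebraic complement of $g(\beta')$ in $\alpha'$ (e.g.\ an orthogonal complement) is then topologically isomorphic to $\xi$.
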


In other words, with notation as in Theorem~\ref{th-1-1}, $\xi$
admits a stratified-algebraic structure if and only if there
exists a stratified-algebraic $\F$-vector bundle $\eta$ on $X$
such that the direct sum $\xi \oplus \eta$ admits a
stratified-algebraic structure.

For our purposes it is convenient to distinguish some vector
bundles by imposing a suitable condition on their rank. For any
topological $\F$-vector bundle $\xi$ on $X$, we regard $\rank
\xi$ (the rank of $\xi$) as a function
\begin{equation*}
\rank \xi \colon X \to \Z,
\end{equation*}
which assigns to every point $x$ in $X$ the dimension of the
fiber of $\xi$ over $x$. We say that $\xi$ has \emph{property}
\rk if for every integer $d$, the set
\begin{equation*}
\{ x \in X \mid (\rank \xi)(x) = d \}
\end{equation*}
is algebraically constructible. Recall that a subset of $X$ is
said to be algebraically constructible if it belongs to the
Boolean algebra generated by the Zariski closed subsets of $X$.
It readily follows that each stratified-algebraic $\F$-vector
bundle on $X$ has property \rk. Thus property \rk is a
necessary condition for $\xi$ to admit a stratified-algebraic
structure. Denote by $\KFrk(X)$ the subgroup of $\KF(X)$
generated by the classes of all topological $\F$-vector bundles
having property \rk. By construction,
\begin{equation*}
\KFstr(X) \subseteq \KFrk(X).
\end{equation*}
Since the group $\KF(X)$ is finitely generated, so is the
quotient group
\begin{equation*}
\GammaF(X) \coloneqq \KFrk(X) / \KFstr(X).
\end{equation*}
Thus the group $\GammaF(X)$ is finite if and only if
\begin{equation*}
r\GammaF(X) = 0
\end{equation*}
for some positive integer $r$. In the present paper the group $\GammaF(X)$
is the main object of investigation.

For any $\F$-vector bundle $\xi$ on $X$ and any positive integer
$r$, we denote by
\begin{equation*}
\xi(r) = \xi \oplus \cdots \oplus \xi
\end{equation*}
the $r$-fold direct sum. The following preliminary result shows
that our approach here is consistent with that of \cite{bib29}.

\begin{proposition}\label{prop-1-2}
Let $X$ be a compact real algebraic variety. For a positive
integer $r$, the following conditions are equivalent:
\begin{conditions}
\item\label{prop-1-2-a} The group $\GammaF(X)$ is finite and
$r\GammaF(X) = 0$.

\item\label{prop-1-2-b} For each topological $\F$-vector bundle
$\xi$ on $X$ having property \rk, the $\F$-vector bundle $\xi(r)$
admits a stratified-algebraic structure.

\item\label{prop-1-2-c} For each topological $\F$-vector bundle
$\eta$ on $X$ having constant rank, the $\F$-vector bundle
$\eta(r)$ admits a stratified-algebraic structure.
\end{conditions}
\end{proposition}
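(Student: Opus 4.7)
My plan is to use Theorem~\ref{th-1-1} to collapse (\ref{prop-1-2-a})~$\Leftrightarrow$~(\ref{prop-1-2-b}) to a tautology, to note that (\ref{prop-1-2-b})~$\Rightarrow$~(\ref{prop-1-2-c}) is immediate, and then to establish the one substantive implication (\ref{prop-1-2-c})~$\Rightarrow$~(\ref{prop-1-2-b}) by constructing a stratified-algebraic ``rank-complementing'' bundle that reduces matters to the constant-rank case.

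For (\ref{prop-1-2-a})~$\Leftrightarrow$~(\ref{prop-1-2-b}), I would use that $\llbracket\xi(r)\rrbracket = r\llbracket\xi\rrbracket$: a bundle $\xi$ with property \rk satisfies $\llbracket\xi\rrbracket \in \KFrk(X)$, and the hypothesis $r\GammaF(X) = 0$ is equivalent, via Theorem~\ref{th-1-1}, to $\xi(r)$ admitting a stratified-algebraic structure for every such $\xi$; since $\KFrk(X)$ is generated by classes of bundles with property \rk, both directions follow. Finiteness of $\GammaF(X)$ in (\ref{prop-1-2-a}) is automatic once $r\GammaF(X) = 0$ is known, because $\KF(X)$ is finitely generated. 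The implication (\ref{prop-1-2-b})~$\Rightarrow$~(\ref{prop-1-2-c}) is clear because a bundle of constant rank trivially has property \rk.

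The main work is (\ref{prop-1-2-c})~$\Rightarrow$~(\ref{prop-1-2-b}). Given $\xi$ with property \rk, set $X_d = \{ x \in X \mid (\rank \xi)(x) = d \}$. Each $X_d$ is algebraically constructible by hypothesis and, since the rank of any topological vector bundle is locally constant and takes only finitely many values on $X$, is also open and closed in the Euclidean topology. Put $N = \max\{ d : X_d \neq \emptyset\}$ and, for each nonempty $X_d$, define a topological $\F$-vector bundle $\eta_d$ on $X$ by $\eta_d|_{X_d} = \varepsilon_{X_d}^{N-d}(\F)$ and $\eta_d|_{X \setminus X_d}$ equal to the zero bundle; this yields a well-defined topological bundle because $X_d$ is open-closed. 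Writing each of the algebraically constructible sets $X_d$ and $X \setminus X_d$ as a finite disjoint union of Zariski locally closed subvarieties produces a stratification of $X$ on each stratum of which $\eta_d$ is either trivial or zero, hence algebraic; so $\eta_d$ admits a stratified-algebraic structure. Then $\eta \coloneqq \bigoplus_d \eta_d$ admits a stratified-algebraic structure, and $\xi \oplus \eta$ has constant rank $N$. Applying (\ref{prop-1-2-c}) to $\xi \oplus \eta$ shows that $(\xi \oplus \eta)(r)$ admits a stratified-algebraic structure, and because $\eta(r)$ already does, the identity
\[
\llbracket \xi(r) \rrbracket = \llbracket (\xi \oplus \eta)(r) \rrbracket - \llbracket \eta(r) \rrbracket
\]
places $\llbracket\xi(r)\rrbracket$ in $\KFstr(X)$; Theorem~\ref{th-1-1} then yields the desired stratified-algebraic structure on $\xi(r)$.

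The principal obstacle is the construction of $\eta$: the key observation is that property \rk forces the rank strata of $\xi$ to be simultaneously algebraically constructible and open-closed in the Euclidean topology, which is precisely what makes it possible to assemble trivial pieces of varying rank on different strata into a genuine topological bundle that nevertheless admits a stratified-algebraic structure.
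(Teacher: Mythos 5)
Your proposal is correct and takes essentially the same route as the paper: the equivalence of (\ref{prop-1-2-a}) and (\ref{prop-1-2-b}) via Theorem~\ref{th-1-1} and the identity $\llbracket\xi(r)\rrbracket=r\llbracket\xi\rrbracket$, and the passage between property \rk and constant rank via a stratified-algebraic rank-complementing bundle, which is precisely the paper's Lemma~\ref{lem-2-1} (your construction of $\eta$ from the open-closed, algebraically constructible rank strata reproves that lemma verbatim, up to realizing $\eta$ inside a trivial bundle).
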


In \cite[Conjecture~C]{bib29}, it is suggested that the group
$\GammaF(X)$ is always finite (for $X$ compact). We show here
that the finiteness of the group $\GammaF(X)$ is equivalent to a
certain condition involving cohomology classes of a special kind.
For any nonnegative integer $k$, we defined in \cite{bib30} a
subgroup $\HCstr^{2k}(X; \Z)$ of the cohomology group $H^{2k}(X;
\Z)$. For the convenience of the reader, the definition
and basic properties of $\HCstr^{2k} (X; \Z)$ are
recalled in Section~\ref{sec-2}.

\begin{theorem}\label{th-1-3}
For any compact real algebraic variety $X$, the
following conditions are equivalent:
\begin{conditions}
\item\label{th-1-3-a} The group $\GammaF(X)$ is finite.

\item\label{th-1-3-b} The quotient group $H^{4k}(X; \Z)
/ \HCstr^{4k}(X; \Z)$ is finite for every positive
integer $k$ satisfying $8k -2 < \dim X$.
\end{conditions}
\end{theorem}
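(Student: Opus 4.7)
The plan is to translate the K-theoretic statement into a cohomological one via the Chern character and its real and quaternionic analogues, and then to use the dimension of $X$ to cut the infinite product of cohomology quotients down to the finite range prescribed in~(b). First, since $\GammaF(X)$ is finitely generated, it is finite if and only if $\KFrk(X) \otimes \Q = \KFstr(X) \otimes \Q$ inside $\KF(X) \otimes \Q$. By Proposition~\ref{prop-1-2} one may restrict attention to bundles of constant rank, for which property~\rk is automatic; after stabilizing by trivial summands, such bundles are classified by maps into the stable classifying spaces $BO$, $BU$ or $BSp$.

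Next, apply the rational characteristic class isomorphism. The Chern character for $\F = \CB$, and the (symplectic) Pontryagin character for $\F = \R, \HB$, give
\begin{equation*}
\KF(X) \otimes \Q \; \xrightarrow{\ \sim\ } \; \bigoplus_{k \geq 0} H^{2k}(X;\Q) \quad \text{or} \quad \bigoplus_{k \geq 0} H^{4k}(X;\Q).
\end{equation*}
The crucial step is to show that under these isomorphisms the subgroup $\KFstr(X) \otimes \Q$ matches $\bigoplus_{k \geq 0} \HCstr^{4k}(X;\Q)$. In the complex case the Chern character a priori hits all even degrees, but replacing a bundle $\xi$ by $\xi \oplus \bar\xi$ kills the odd-index Chern classes and converts the even ones, up to sign, into Pontryagin classes, so the odd-degree components of $H^{2k}$ are no obstruction to the existence of a stratified-algebraic structure after doubling. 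Together with Theorem~\ref{th-1-1}, this produces a rational identification
\begin{equation*}
\GammaF(X) \otimes \Q \; \cong \; \bigoplus_{k \geq 1} \bigl( H^{4k}(X;\Z) / \HCstr^{4k}(X;\Z) \bigr) \otimes \Q.
\end{equation*}

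Third, prove that $H^{4k}(X;\Z) / \HCstr^{4k}(X;\Z)$ is automatically finite whenever $8k - 2 \geq \dim X$. For $4k > \dim X$ the cohomology group itself vanishes. In the remaining range $\dim X / 2 < 4k \leq \dim X$, the argument should be obstruction-theoretic: a cohomology class in such a high degree on a variety of the given dimension can be realized, up to torsion, by a stratified-algebraic cycle because the further obstructions live in cohomology groups of $X$ that are zero for dimension reasons. With this, condition~(a) becomes equivalent to the finiteness of each $H^{4k}(X;\Z) / \HCstr^{4k}(X;\Z)$, which in turn reduces to the range $8k - 2 < \dim X$, i.e.\ to condition~(b).

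The main obstacle is the identification in the second step: showing that the image of $\KFstr(X)$ under the rational Chern/Pontryagin character coincides precisely with $\bigoplus_k \HCstr^{4k}(X;\Q)$. This requires both the explicit construction of stratified-algebraic representatives for Chern and Pontryagin classes, drawing on the theory developed in \cite{bib30}, and an appeal to Theorem~\ref{th-1-1} to pass from K-theoretic identities back to bundle-level isomorphisms. The high-degree cutoff of step three, while technical, should follow more routinely once the cohomological framework of \cite{bib30} is available.
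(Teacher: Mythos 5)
Your overall architecture matches the paper's: reduce to constant-rank bundles (Proposition~\ref{prop-1-2}), translate to cohomology via the Chern character and the fact that Chern classes of stratified-algebraic bundles lie in $\HCstr^{2j}(X;\Z)$ (this is Proposition~\ref{prop-2-6}, resting on \cite[Proposition~8.9]{bib30}), and kill the odd-index Chern classes by passing to $\xi\oplus\bar\xi\cong(\HB\otimes\xi)_{\CB}$ together with the descent statement that a bundle is stratified-algebraic iff its underlying $\R$-bundle is (Theorem~\ref{th-2-2}, Lemmas~\ref{lem-2-3} and~\ref{lem-2-4}). These are exactly Theorems~\ref{th-2-14}--\ref{th-2-16} and Lemma~\ref{lem-2-17} in the paper, and your acknowledgement that this step leans on \cite{bib30} is fair.

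The genuine gap is in your third step, which you describe as the routine part but which is in fact where the paper does its hardest new work. You need: for $8k-2\ge\dim X$, the quotient $H^{4k}(X;\Z)/\HCstr^{4k}(X;\Z)$ is automatically finite. Serre's theorem (Proposition~\ref{prop-2-7}) does give, in exactly this stable range, that an integer multiple of every class in $H^{4k}(X;\Z)$ is \emph{spherical} --- that much of your ``obstructions vanish for dimension reasons'' heuristic is correct. But spherical classes are not automatically stratified-$\CB$-algebraic, and no obstruction-theoretic argument closes that gap. The paper's Lemma~\ref{lem-2-12} shows $2(k-1)!\,\Hsph^{4k}(X;\Z)\subseteq\HCstr^{4k}(X;\Z)$, and its proof is genuinely geometric: one precomposes $h\colon X\to\SB^{4k}$ with a degree-$2$ self-map $\varphi$ of the sphere, realizes $h^{-1}(y)\cup h^{-1}(z)$ as the transverse zero set of a regular map by approximating a smooth defining function (Lemma~\ref{lem-2-8}, using \cite[Theorem~2.4]{bib24}), and reduces the singular case to the nonsingular one via filtrations and multiblowups (Theorem~\ref{th-2-10}, Lemma~\ref{lem-2-11}). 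The factor $2$ coming from the degree-$2$ map is essential and is the reason the passage works at all; it cannot be recovered by noting that higher obstruction groups vanish. Without this input your reduction of condition (a) to the finite range $8k-2<\dim X$ is unsupported, and you would only have proved the theorem with (b) quantified over all $k\ge1$. (A minor further slip: $\HCstr^{4k}$ is defined via pullbacks of $\CB$-algebraic classes along stratified-regular maps into auxiliary nonsingular varieties, not via cycles on $X$ itself, so ``realized by a stratified-algebraic cycle'' is not the right target statement.)
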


Since the groups $\HCstr^{2k}(-; \Z)$ are hard to
compute, it is worthwhile to give a simple topological
criterion for the finiteness of the group $\GammaF(X)$.
To this end some preparation is required.

For any positive integer $d$, let $\SB^d$ denote the
unit $d$-sphere
\begin{equation*}
\SB^d = \{ (u_0, \ldots, u_d) \in \R^{d+1} \mid u_0^2 +
\cdots + u_d^2 = 1 \}.
\end{equation*}
Let $s_d$ be a generator of the cohomology group
$H^d(\SB^d; \Z) \cong \Z$. A cohomology class $u$ in
$H^d(\Omega; \Z)$, where $\Omega$ is an arbitrary
topological space, is said to be \emph{spherical} if
$u = h^*(s_d)$ for some continuous map $h \colon \Omega
\to \SB^d$. Denote by $\Hsph^d(\Omega; \Z)$ the
subgroup of $H^d (\Omega; \Z)$ generated by all
spherical cohomology classes. In general a cohomology
class in $\Hsph^d(\Omega; \Z)$ need not be spherical.

\begin{theorem}\label{th-1-4}
Let $X$ be a compact real algebraic variety. If the
quotient group 
%---display for better look only
\begin{equation*}
H^{4k}(X; \Z) / \Hsph^{4k}(X; \Z)
\end{equation*}
is finite for every positive integer $k$ satisfying $8k -
2 < \dim X$, then the group $\GammaF(X)$ is finite.
\end{theorem}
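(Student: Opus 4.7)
The plan is to deduce Theorem~\ref{th-1-4} from Theorem~\ref{th-1-3}. Fix a positive integer $k$ with $8k-2 < \dim X$. Since $H^{4k}(X;\Z)$ is finitely generated, the quotient $H^{4k}(X;\Z)/\HCstr^{4k}(X;\Z)$ is finite if and only if every class in $H^{4k}(X;\Z)$ admits a non-zero integer multiple lying in $\HCstr^{4k}(X;\Z)$. Combined with the hypothesis of Theorem~\ref{th-1-4}---which says that $\Hsph^{4k}(X;\Z)$ has finite index in $H^{4k}(X;\Z)$---it therefore suffices to prove that for every continuous map $h \colon X \to \SB^{4k}$ some non-zero integer multiple of the spherical class $h^*(s_{4k})$ lies in $\HCstr^{4k}(X;\Z)$; Theorem~\ref{th-1-3} then delivers the finiteness of $\GammaF(X)$.

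I would produce such a multiple in two steps. First, approximate $h$ in the $C^0$-topology by a stratified-regular map $g \colon X \to \SB^{4k}$; since sufficiently close continuous maps into a sphere are homotopic, this yields $h^*(s_{4k}) = g^*(s_{4k})$. The required approximation of continuous maps from a compact real algebraic variety to a sphere by stratified-regular maps is exactly the type of statement developed in the framework of \cite{bib30, bib29}.

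Second, appeal to the classical Bott construction on $\SB^{4k}$ to obtain an algebraic complex vector bundle $\zeta_k$ of rank $2k$ whose top Chern class satisfies $c_{2k}(\zeta_k) = \pm(2k-1)!\, s_{4k}$. The pullback $g^*\zeta_k$ is then a stratified-algebraic complex vector bundle on $X$, and by the definition of $\HCstr^{4k}(X;\Z)$ to be recalled in Section~\ref{sec-2} we have $c_{2k}(g^*\zeta_k) \in \HCstr^{4k}(X;\Z)$. Naturality of Chern classes now gives
\begin{equation*}
(2k-1)!\, h^*(s_{4k}) \;=\; (2k-1)!\, g^*(s_{4k}) \;=\; \pm g^*\bigl(c_{2k}(\zeta_k)\bigr) \;=\; \pm c_{2k}(g^*\zeta_k) \;\in\; \HCstr^{4k}(X;\Z),
\end{equation*}
which is a non-zero integer multiple of $h^*(s_{4k})$ inside $\HCstr^{4k}(X;\Z)$.

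The main technical obstacle I anticipate is the first step, namely securing a stratified-regular approximation of an arbitrary continuous map $X \to \SB^{4k}$; although such approximation results belong naturally to the circle of ideas of \cite{bib30, bib29}, the precise statement applicable to maps into $\SB^{4k}$ must be carefully identified and invoked. By contrast, once the approximation is in hand, everything else is formal: the algebraic realization of the Bott bundle and the naturality of Chern classes are entirely classical, and the passage from ``non-zero multiple inside $\HCstr^{4k}$'' to the finiteness of $\GammaF(X)$ is contained in Theorem~\ref{th-1-3}.
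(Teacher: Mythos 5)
Your reduction is sound: by Theorem~\ref{th-1-3} it suffices to show that every spherical class in $H^{4k}(X;\Z)$ has a non-zero integer multiple in $\HCstr^{4k}(X;\Z)$, and the Bott bundle on $\SB^{4k}$ with top Chern class $(2k-1)!\,s_{4k}$ (which does admit an algebraic structure) is exactly the right object to pull back. The gap is your Step 1. There is no theorem, in this paper or in the cited literature, asserting that an \emph{arbitrary} continuous map $h \colon X \to \SB^{4k}$ can be $C^0$-approximated by (equivalently, is homotopic to) a stratified-regular map; for $\dim X$ larger than the target dimension this is essentially an open problem, and the paper is built around its absence. Concretely, the obstruction is that $h^{-1}(y)$ for a regular value $y$ need not be isotopic to a nonsingular algebraic subvariety of $X$, so the standard transversality argument for producing a (continuous) rational representative of the homotopy class breaks down.

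The paper's workaround is the degree-two trick, and it is not optional. One precomposes with a continuous map $\varphi \colon \SB^{4k} \to \SB^{4k}$ of degree $2$; then $(\varphi\circ h)^{-1}(y) = h^{-1}(y)\cup h^{-1}(z)$ is the boundary of $h^{-1}(A)$ for an arc $A$ joining $y$ and $z$, hence bounds with trivial normal bundle and can be realized as the regular zero set of a regular map. This is what makes $\varphi\circ h$ (not $h$ itself) homotopic to a stratified-regular map on a nonsingular variety (Lemma~\ref{lem-2-8}); the singular case is then handled not by approximating maps on $X$ but by pulling the bundle back along multiblowups of a filtration and invoking Theorem~\ref{th-2-10} (Lemma~\ref{lem-2-11}). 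The net result is Lemma~\ref{lem-2-12}, $2(2k-1)!\,\Hsph^{4k}(X;\Z) \subseteq \HCstr^{4k}(X;\Z)$ --- the extra factor $2$ relative to your $(2k-1)!$ is precisely the price of the degree-two composition, and it is harmless for finiteness. So your argument becomes correct once you replace ``approximate $h$ by a stratified-regular map'' with ``apply Lemma~\ref{lem-2-11} to $(\varphi\circ h)^*\theta$,'' which is exactly the paper's proof (Theorem~\ref{th-1-3} plus Lemma~\ref{lem-2-12}).
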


As a consequence we obtain the following.

\begin{corollary}\label{cor-1-5}
Let $X$ be a compact real algebraic variety. If each
connected component of $X$ is homotopically equivalent
to $\SB^{d_1} \times \cdots \times \SB^{d_n}$ for some
positive integers $d_1, \ldots, d_n$, then the group
$\GammaF(X)$ is finite.
\end{corollary}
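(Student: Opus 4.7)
The plan is to derive Corollary~\ref{cor-1-5} from Theorem~\ref{th-1-4} by proving the stronger statement that $\Hsph^d(X; \Z) = H^d(X; \Z)$ for every integer $d \geq 1$; the quotients in Theorem~\ref{th-1-4} are then all trivial, in particular finite.

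First I reduce to a single connected component. Writing $X = X_1 \sqcup \cdots \sqcup X_m$ for the decomposition into components, $H^d(X; \Z)$ splits as $\bigoplus_i H^d(X_i; \Z)$. Given $i$ and a spherical class $u_i \in H^d(X_i; \Z)$ realised by $h_i \colon X_i \to \SB^d$, the map $h \colon X \to \SB^d$ that agrees with $h_i$ on $X_i$ and is constant (at a chosen basepoint) on the remaining components realises the class with $u_i$ in the $i$-th summand and zero elsewhere (using $d \geq 1$ to kill the constant contributions). Hence $\Hsph^d(X; \Z) = \bigoplus_i \Hsph^d(X_i; \Z)$. Since both $H^d$ and $\Hsph^d$ are homotopy invariants, we may assume $X = Y := \SB^{d_1} \times \cdots \times \SB^{d_n}$ with each $d_j \geq 1$.

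By the K\"unneth formula, $H^*(Y; \Z)$ is a free $\Z$-module with basis the cup products $s_I := s_{i_1} \cupproduct \cdots \cupproduct s_{i_k}$ indexed by nonempty subsets $I = \{i_1 < \cdots < i_k\}$ of $\{1, \ldots, n\}$, where $s_i := p_i^*(s_{d_i})$ and $p_i \colon Y \to \SB^{d_i}$ is the $i$-th projection. It suffices to show that each such $s_I$ is spherical. Setting $d_I := d_{i_1} + \cdots + d_{i_k}$, consider the composite
\begin{equation*}
h_I \colon Y \xrightarrow{p_{i_1} \times \cdots \times p_{i_k}} \SB^{d_{i_1}} \times \cdots \times \SB^{d_{i_k}} \xrightarrow{q} \SB^{d_{i_1}} \wedge \cdots \wedge \SB^{d_{i_k}} \cong \SB^{d_I},
\end{equation*}
where $q$ collapses the fat wedge to the basepoint. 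A classical computation, via the relative K\"unneth isomorphism for the cofibration $(\text{fat wedge}) \hookrightarrow \prod_j \SB^{d_{i_j}} \twoheadrightarrow \SB^{d_I}$, identifies $q^*$ of a generator $s_{d_I}$ with $\pm(s_{d_{i_1}} \times \cdots \times s_{d_{i_k}})$, the top external cross product. By naturality of the cross product, pulling back along $p_{i_1} \times \cdots \times p_{i_k}$ yields $h_I^*(s_{d_I}) = \pm s_I$, so $s_I$ is spherical. The only non-routine ingredient is this smash-product identification, which is a standard piece of algebraic topology; once granted, Theorem~\ref{th-1-4} delivers the corollary.
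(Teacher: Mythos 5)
Your proof is correct and takes essentially the same route as the paper: the paper's own proof is a one-liner asserting $\Hsph^l(X;\Z) = H^l(X;\Z)$ for all $l \geq 1$ and then invoking Theorem~\ref{th-1-4}. Your argument simply supplies the standard verification of that equality (component-wise reduction, the K\"unneth basis, and the collapse of the product onto the smash product), which the paper leaves to the reader.
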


\begin{proof}
Since $\Hsph^l(X; \Z) = H^l(X; \Z)$ for every positive
integer $l$, it suffices to make use of
Theorem~\ref{th-1-4}.
\end{proof}

It is interesting to compare Corollary~\ref{cor-1-5}
with related, previously known, results. If ${X = X_1
\times \cdots \times X_n}$, where each $X_i$ is a
compact real algebraic variety homotopically equivalent
to $\SB^{d_i}$ for $1 \leq i \leq n$, then ${\GammaF(X) =
0}$ for ${\F = \CB}$ and ${\F = \HB}$, and ${2 \Gamma_{\R}(X)
= 0}$, cf. \cite[Theorem~1.10]{bib30}. On the other
hand, there exists a nonsingular real algebraic variety
$X$ diffeomorphic to the $n$-fold product $\SB^1 \times \cdots \times
\SB^1$, $n > d(\F)$, such that $\GammaF(X) \neq
0$, cf. \cite[Example~7.10]{bib30}.

For any compact real algebraic variety $X$, the equality
$H^l(X; \Z) = 0$ holds if $l > \dim X$, cf.
\cite[p.~217]{bib7}. Hence, in view of either
Theorem~\ref{th-1-3} or Theorem~\ref{th-1-4}, the group
$\GammaF(X)$ is finite for $\dim X \leq 6$. This is
extended below to $\dim X \leq 8$. Actually, we obtain
a result containing additional information.

Denote by $e(\F)$ the integer satisfying $d(\F) =
2^{e(\F)}$, that is,
\begin{equation*}
e(\F) =
\begin{cases}
0 & \textrm{if } \F = \R \\
1 & \textrm{if } \F = \CB \\
2 & \textrm{if } \F = \HB.
\end{cases}
\end{equation*}
Given a nonnegative integer $n$, set
\begin{align*}
a(n) &= \min \{ l \in \Z \mid l \geq 0,\ 2^l \geq n \},
\\
a(n, \F) &= \max \{ 0, a(n) - e(\F) \}.
\end{align*}
It is conjectured in \cite{bib29} that
\begin{equation*}
2^{a( \dim X, \F)} \GammaF(X) = 0
\end{equation*}
for every compact real algebraic variety $X$. This
conjecture is confirmed in \cite{bib29} for varieties
of dimension not exceeding $5$. Using different
methods, we get the following.

\begin{theorem}\label{th-1-6}
For any compact real algebraic variety $X$ of dimension
at most $8$, the group $\GammaF(X)$ is finite and
\begin{equation*}
2^{a (\dim X, \F) + a(X)} \GammaF(X) = 0,
\end{equation*}
where $a(X) = 0$ if $\dim X \leq 7$ and $a(X) = 2$ if
$\dim X = 8$.
\end{theorem}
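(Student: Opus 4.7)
The plan is to prove the quantitative assertion $2^{a(\dim X, \F) + a(X)} \GammaF(X) = 0$ directly; this immediately yields finiteness, as $\GammaF(X)$ is already known to be finitely generated. By Proposition~\ref{prop-1-2}, the statement is equivalent to showing that for every topological $\F$-vector bundle $\eta$ on $X$ of constant rank, the $r$-fold direct sum $\eta(r)$ with $r = 2^{a(\dim X, \F) + a(X)}$ admits a stratified-algebraic structure.

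A natural first step is to relate the three choices of $\F$ to a single one, say $\F = \CB$: complexification of real bundles and restriction of scalars from quaternionic to complex change the rank by a factor of $2$, which is precisely what the subtraction of $e(\F)$ in $a(\dim X, \F) = \max\{0, a(\dim X) - e(\F)\}$ accounts for. After this reduction, stabilizing $\eta$ by adding trivial summands puts us in the range where $\eta$ is classified up to isomorphism by its image in the topological K-group $K_{\CB}(X)$.

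The core of the argument is then to propagate a stratified-algebraic structure degree by degree through the even cohomology of $X$. In each degree $2i \leq \dim X$, one must realize integer cohomology classes---up to a bounded power of $2$---as Chern classes of stratified-algebraic bundles; the basic mechanism for this, already encoded in Theorem~\ref{th-1-4}, is that spherical cohomology classes are stratified-algebraic because the spheres $\SB^{2i}$ carry natural real algebraic structures and pullbacks of their canonical bundles are again stratified-algebraic. The elementary inequality $2^{a(n)} \geq n$ controls how many doublings are needed across degrees $\leq \dim X \leq 7$, yielding the factor $2^{a(\dim X) - e(\F)}$.

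The main obstacle is the extra factor $2^{a(X)} = 4$ required in dimension $8$. It arises in the top cohomological degree $2i = 8$, where realizing an arbitrary integer class $u \in H^{8}(X; \Z)$ as the top Chern class of a stratified-algebraic complex bundle introduces an additional divisibility obstruction: the generator of $\widetilde{K}_{\CB}(\SB^8) \cong \Z$ has top Chern class $\pm 3! \cdot s_8$, and controlling this obstruction precisely, without further loss coming from the interaction between different degrees, produces exactly the factor $2^2$ that separates the present result from the conjectural bound $2^{a(\dim X, \F)} \GammaF(X) = 0$ of \cite{bib29}.
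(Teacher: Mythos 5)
Your proposal does not follow the paper's route, and as written it has genuine gaps that prevent it from yielding either the finiteness or the precise power of $2$. The central problem is that the cohomological mechanism you invoke (realizing classes as Chern classes of stratified-algebraic bundles and passing back to K-theory) inherently loses control of the multiplier: the passage from ``the Chern character of $\xi$ lies in $\HCstreven(X;\Z)\otimes_{\Z}\Q$'' to ``$\xi(r)$ admits a stratified-algebraic structure'' (Proposition~\ref{prop-2-6}, resting on \cite[Proposition~8.9]{bib30}) only produces \emph{some} positive integer $r$, with denominators coming from the factorials in the Chern character, so no specific power of $2$ can be extracted this way. Worse, the spherical-class mechanism does not even give finiteness in dimensions $7$ and $8$: Serre's theorem (Proposition~\ref{prop-2-7}) controls $H^4(X;\Z)$ only when $\dim X\leq 6$, so for $\dim X\in\{7,8\}$ the quotient $H^4(X;\Z)/\Hsph^4(X;\Z)$ is not a priori finite and Theorem~\ref{th-1-4} cannot be applied unconditionally. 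Your explanation of the factor $2^{a(X)}=4$ in dimension $8$ is also unconvincing: the top Chern class of the generator of the reduced complex K-theory of $\SB^8$ is $(4-1)!\,s_8=6\,s_8$, which carries an odd factor $3$, and you give no mechanism by which ``controlling this precisely'' leaves exactly $2^2$.

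The paper's proof is of a different nature and essentially avoids cohomology. After reducing to constant rank (Proposition~\ref{prop-1-2}), it reduces $\F=\R$ and $\F=\CB$ to $\F=\HB$ via Lemmas~\ref{lem-2-3} and~\ref{lem-2-4}, each scalar extension accounting for one factor of $2$ (this is where $e(\F)$ enters, and note the reduction goes toward $\HB$, not toward $\CB$ as you suggest). For $\F=\HB$ and $5\leq\dim X\leq 7$, obstruction theory splits $\xi=\lambda\oplus\varepsilon$ with $\lambda$ an $\HB$-line bundle generated by two sections, hence $\lambda\cong h^*\theta$ for a continuous map $h\colon X\to\SB^4=\PB^1(\HB)$ and the tautological bundle $\theta$; Theorem~\ref{th-2-13}, built on a degree-$2$ self-map of $\SB^4$ and Lemma~\ref{lem-2-11}, then shows that $\lambda(2)$, hence $\xi(2)$, is stratified-algebraic. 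In dimension $8$ the extra factor $4$ comes from a geometric trick: after multiblowups (including blowing up a point on each component) one may assume $X$ nonsingular with every component nonorientable, so $2H^8(X;\Z)=0$; then $c_4((\xi(4))_{\CB})=4c_4(\xi_{\CB})+6c_2(\xi_{\CB})\cupproduct c_2(\xi_{\CB})=0$, the Euler-class obstruction vanishes, $\xi(4)$ splits off an $\HB$-line bundle $\mu$, and one more doubling via Theorem~\ref{th-2-13} handles $\xi(8)\cong\mu(2)\oplus\delta(2)$. None of these ingredients appears in your outline, and without them the claimed bound is not established.
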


We are not able to decide whether Theorem~\ref{th-1-6}
holds with $a(X)=0$ for $\dim X = 8$.

In Section~\ref{sec-2} we establish relationships
between the groups $\Hsph^{2k}(-;\Z)$ and
$\HCstr^{2k}(-; \Z)$ for $k \geq 1$. This leads to the
proofs of Theorems~\ref{th-1-3} and~\ref{th-1-4}. Along
the way we obtain closely related results,
Theorems~\ref{th-2-14}, \ref{th-2-15} and
\ref{th-2-16}, which are of independent interest.
Noteworthy is also Theorem~\ref{th-2-13}, which plays a
key role in the proof of Theorem~\ref{th-1-6}. In
Section~\ref{sec-3} we investigate topological
$\CB$-line bundles admitting a stratified-algebraic
structure.

\begin{notation}
Given two $\F$-vector bundles $\xi$ and $\eta$ on the
same topological space, we will write $\xi \cong \eta$
to indicate that they are isomorphic.
\end{notation}

\section{Stratified-algebraic versus topological vector
bundles}\label{sec-2}

To begin with we establish a connection between vector
bundles having property \rk and those of constant rank.

\begin{lemma}\label{lem-2-1}
Let $X$ be a real algebraic variety and let $\xi$ be a
topological $\F$-vector bundle on $X$. If $\xi$ has
property \rk, then there exists a stratified-algebraic
$\F$-vector bundle $\eta$ on $X$ such that the direct
sum $\xi \oplus \eta$ is of constant rank.
\end{lemma}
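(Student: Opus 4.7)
The plan is to build $\eta$ fibrewise from the level sets of $\rank \xi$. Since $\xi$ is a topological $\F$-vector bundle, the function $\rank \xi$ is locally constant in the Euclidean topology, and since $X$ is a semialgebraic set it has only finitely many Euclidean connected components, so $\rank \xi$ takes only finitely many values, say $d_1 < \cdots < d_m$. Writing $X_i = \{x \in X \mid (\rank \xi)(x) = d_i\}$ and $D = d_m$, one observes that each $X_i$ is Euclidean-clopen (by local constancy of the rank) and algebraically constructible (by property \rk). The natural candidate for $\eta$ is the $\F$-vector bundle that restricts to the trivial bundle of rank $D - d_i$ on $X_i$, realized concretely as the subbundle of $\varepsilon_X^D(\F)$ spanned on $X_i$ by the first $D - d_i$ standard coordinates.

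The verification splits into two parts. First, $\eta$ is a genuine topological $\F$-vector subbundle of $\varepsilon_X^D(\F)$ because the $X_i$ are pairwise disjoint and clopen in $X$, so local trivializations on each $X_i$ patch together; and by design $\xi \oplus \eta$ has constant rank $D$. Second, $\eta$ is stratified-algebraic: each algebraically constructible set $X_i$ admits a decomposition into a finite disjoint union of Zariski locally closed subvarieties $S_{i,1}, \ldots, S_{i,n_i}$, so $\SC = \{S_{i,j}\}_{i,j}$ is a stratification of $X$, and on each stratum $S_{i,j} \subseteq X_i$ the restriction $\eta|_{S_{i,j}}$ is the trivial algebraic subbundle of $\varepsilon_{S_{i,j}}^D(\F)$ generated by the first $D - d_i$ coordinates, which is manifestly algebraic.

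There is no serious obstacle. The only tempting pitfall would be to try to realize $\eta$ as a \emph{single} algebraic subbundle of a trivial bundle, which would require each $X_i$ to be Zariski locally closed; this is precisely what the stratified-algebraic notion was designed to bypass, allowing the algebraic structure to change from stratum to stratum while the underlying topological bundle remains globally defined thanks to the clopenness of the $X_i$ in the Euclidean topology.
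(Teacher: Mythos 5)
Your proof is correct and follows essentially the same route as the paper's: decompose $X$ into the (clopen, algebraically constructible) level sets of $\rank\xi$, take $\eta$ to be trivial of complementary rank on each level set, and use a stratification refining the constructible decomposition to see that $\eta$ is stratified-algebraic. The only cosmetic difference is that you bound the rank via the finiteness of the set of connected components of $X$ and work inside $\varepsilon_X^D(\F)$ with $D$ the maximal rank, whereas the paper first embeds $\xi$ into some $\varepsilon_X^n(\F)$ and complements up to rank $n$; both are equally valid.
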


\begin{proof}
Since $X$ has the homotopy type of a compact polyhedron
\cite[pp.~217, 225]{bib7}, we may assume that $\xi$ is a
topological $\F$-vector subbundle of
$\varepsilon_X^n(\F)$ for some positive integer $n$.
Assume that $\xi$ has property \rk. By definition, for
each integer $d$ satisfying $0 \leq d \leq n$, the set
\begin{equation*}
R(d) = \{ x \in X \mid (\rank \xi)(x) = d \}
\end{equation*}
is algebraically constructible. Thus $R(d)$ is the
union of a finite collection of pairwise disjoint
Zariski locally closed subvarieties of $X$. In
particular, there exists a stratification $\SC$ of $X$
such that each set $R(d)$ is the union of some strata of
$\SC$. Furthermore, each nonempty set $R(d)$ is the
union of some connected components of $X$. It follows
that we can find a topological $\F$-vector subbundle
$\eta$ of $\varepsilon_X^n(\F)$ whose restriction
$\eta|_{R(d)}$ is the trivial $\F$-vector subbundle of
$\varepsilon_{R(d)}^n(\F)$ with total space $R(d)
\times (\F^{n-d} \times \{ 0 \})$, where $\F^{n-d}
\times \{ 0 \} \subseteq \F^n$. By construction, $\eta$
is a stratified-algebraic $\F$-vector bundle and the
direct sum $\xi\oplus\eta$ is of rank $n$.
\end{proof}

In particular, if $\KFcrk(X)$ is the subgroup of
$\KF(X)$ generated by the classes of all topological
$\F$-vector bundles of constant rank, then
\begin{equation*}
\KFstr(X) + \KFcrk(X) = \KFrk(X).
\end{equation*}
Hence the group $\GammaF(X)$ is isomorphic to the
quotient group
\begin{equation*}
\KFcrk(X) / \KFstr(X) \cap \KFcrk(X).
\end{equation*}

\begin{proof}[Proof of Proposition~\ref{prop-1-2}]
Obviously, (\ref{prop-1-2-b}) implies
(\ref{prop-1-2-a}). According to Theorem~\ref{th-1-1},
(\ref{prop-1-2-a}) implies (\ref{prop-1-2-b}). Hence,
in view of Lemma~\ref{lem-2-1},
(\ref{prop-1-2-a}) and (\ref{prop-1-2-c}) are
equivalent.
\end{proof}

Let $X$ be a real algebraic variety. Let $\K$ be a
subfield of $\F$, where $\K$ (as $\F$) stands for $\R$,
$\CB$ or $\HB$. Any $\F$-vector bundle $\xi$ on $X$ can
be regarded as a $\K$-vector bundle, which is indicated
by $\xi_{\K}$. In particular, $\xi_{\K} = \xi$ if $\K =
\F$. Furthermore, $\xi_{\R} = (\xi_{\K})_{\R}$. If the
$\F$-vector bundle $\xi$ admits a stratified-algebraic
structure, then so does the $\K$-vector bundle
$\xi_{\K}$.

The following result will be frequently referred to.

\begin{theorem}\label{th-2-2}
Let $X$ be a compact real algebraic variety. A
topological $\F$-vector bundle $\xi$ on $X$ admits a
stratified-algebraic structure if and only if the
$\K$-vector bundle $\xi_{\K}$ admits a
stratified-algebraic structure.
\end{theorem}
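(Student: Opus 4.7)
The ``only if'' direction is straightforward: if $\xi \cong \eta$ for a stratified-algebraic $\F$-vector subbundle $\eta \subseteq \varepsilon_X^n(\F)$ with witnessing stratification $\SC$, then after identifying $\varepsilon_X^n(\F)$ with $\varepsilon_X^{n d(\F)/d(\K)}(\K)$ via a $\K$-basis of $\F$, the scalar restriction $\eta_{\K}$ is a $\K$-vector subbundle whose restriction to each stratum of $\SC$ is algebraic (scalar restriction of an algebraic $\F$-bundle is an algebraic $\K$-bundle), so $\xi_{\K} \cong \eta_{\K}$ admits a stratified-algebraic structure.

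For the reverse implication we may assume $\K \subsetneq \F$, and it suffices, by Theorem~\ref{th-1-1} applied over $\F$, to produce a stratified-algebraic $\F$-vector bundle on $X$ isomorphic to $\xi$. My plan is to construct one by refining the stratification witnessing the stratified-algebraic $\K$-structure of $\xi_{\K}$. Fix an isomorphism $\xi_{\K} \cong \eta'$ with $\eta' \subseteq \varepsilon_X^m(\K)$ stratified-algebraic and witnessing stratification $\SC$. The $\F$-module structure on $\xi$ is transported to $\eta'$, producing on each stratum $S \in \SC$ a continuous section of the algebraic bundle of $\K$-linear endomorphisms of $\eta'|_S$ satisfying the algebraic relations defining a left $\F$-module (a complex structure $J_S$ with $J_S^2 = -\mathrm{id}$ when $(\K,\F) = (\R,\CB)$, and analogous data in the other two cases). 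The task is then to replace these continuous sections by algebraic ones after a refinement of $\SC$.

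The plan for this step is to subdivide each stratum into Zariski locally closed smooth pieces, apply algebraic approximation of continuous sections of an algebraic endomorphism bundle over each piece, and then correct the approximating section back to a genuine $\F$-module structure on a Zariski open subset by an algebraic functional calculus. For example, in the $(\R, \CB)$ case, if $J'$ is an algebraic endomorphism sufficiently close to the true complex structure $J$, then $-(J')^2$ is positive definite, and one sets $\tilde J = J' \circ (-(J')^2)^{-1/2}$, the positive square root being extracted algebraically on a suitable two-sheeted cover; this $\tilde J$ is algebraic and satisfies $\tilde J^2 = -\mathrm{id}$. The quaternionic cases are handled similarly by correcting a pair of anticommuting almost-algebraic operators. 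The complement of the open locus on which each correction succeeds is handled recursively as further strata, and Noetherian induction on dimension guarantees termination.

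I expect the main obstacle to be precisely this correction step: algebraic approximation by itself preserves continuity but not the defining identities of an $\F$-module structure, so one must arrange the approximation and subsequent correction compatibly across the stratification, while preserving the topological isomorphism type of $\xi$. Compactness of $X$ enters to make the underlying approximation theorems applicable and to control the open loci on which the corrections are valid. Once algebraic $\F$-module structures have been produced on every stratum of the refined stratification, they assemble into a stratified-algebraic $\F$-vector subbundle of some $\varepsilon_X^n(\F)$ isomorphic to $\xi$, which completes the argument via Theorem~\ref{th-1-1}.
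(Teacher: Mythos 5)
Your ``only if'' direction is fine, and your reduction observation is weaker than the one the paper uses: since $\xi_{\R}=(\xi_{\K})_{\R}$, the whole theorem reduces to the single case $\K=\R$, whereas you set out to treat each pair $(\K,\F)$ separately. More importantly, the paper does not reprove that case at all --- it is exactly \cite[Theorem~1.7]{bib30}, described there as ``rather involved,'' and the proof in the present paper is one line of citation plus the transitivity of scalar restriction. Your attempt to reconstruct that deep result from scratch contains genuine gaps.

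Two concrete problems. First, your correction step does not yield \emph{algebraic} data. If $J'$ is a regular endomorphism section close to the continuous complex structure $J$, then $-(J')^2$ is indeed close to the identity and $\tilde J = J'\circ(-(J')^2)^{-1/2}$ does satisfy $\tilde J^2=-\mathrm{id}$; but the matrix square root of an operator near the identity is given by a convergent power series, i.e.\ it is a Nash (semialgebraic analytic) function of the entries, not a regular one, and invoking ``a suitable two-sheeted cover'' changes the underlying variety rather than producing a regular section on the given stratum. So $\tilde J$ is at best Nash, and the resulting $\F$-structure is not algebraic in the sense required. (A similar issue already afflicts the approximation itself: the strata are Zariski locally closed, hence non-compact, so uniform approximation of continuous sections by regular ones is not available without substantial extra work.) Second, even granting algebraic $\F$-module structures on each stratum of a refined stratification, a stratified-algebraic $\F$-vector bundle must be a single \emph{topological} $\F$-vector subbundle of some $\varepsilon_X^n(\F)$ whose restrictions to strata are algebraic; your stratum-by-stratum modifications of $J$ are performed independently, and nothing in the argument makes the corrected structures match up continuously across stratum boundaries, nor shows that the result is isomorphic to $\xi$ as a topological $\F$-bundle. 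These are precisely the difficulties that make \cite[Theorem~1.7]{bib30} a substantial theorem rather than an approximation exercise, and your sketch does not close them.
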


\begin{proof}
The proof for $\K=\R$, rather involved, is given
in \cite[Theorem~1.7]{bib30}. The general case follows
since $\xi_{\R} = (\xi_{\K})_{\R}$.
\end{proof}

We will also make use of the extension of scalars
construction. Let $X$ be a real algebraic variety. Any
$\K$-vector bundle $\xi$ on $X$ gives rise to the
$\F$-vector bundle $\F \otimes \xi$ on $X$. Here ${\F
\otimes \xi = \xi}$ if $\K=\F$, $\CB \otimes \xi$ is the
complexification of $\xi$ if $\K=\R$, and $\HB \otimes
\xi$ is the quaternionization of $\xi$ if $\K=\R$ or
$\K=\CB$. If the $\K$-vector bundle $\xi$ admits a
stratified-algebraic structure, then so does the
$\F$-vector bundle $\F \otimes \xi$.

For any $\CB$-vector bundle $\xi$, let $\bar{\xi}$
denote the conjugate bundle, cf. \cite{bib31}. Note
that
\begin{equation*}
\bar{\xi}_{\R} \cong \xi_{\R}.
\end{equation*}
Furthermore, for the $\HB$-vector bundle $\HB \otimes \xi$, we have
\begin{equation*}
(\HB \otimes \xi)_{\CB} \cong \xi \oplus \bar{\xi}.
\end{equation*}

\begin{lemma}\label{lem-2-3}
Let $X$ be a compact real algebraic variety and let
$\xi$ be a topological $\CB$-vector bundle on $X$. For
any positive integer $q$, the $\HB$-vector bundle $(\HB
\otimes \xi)(q)$ admits a stratified-algebraic
structure if and only if so does the $\CB$-vector
bundle $\xi(2q)$.
\end{lemma}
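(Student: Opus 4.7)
The plan is to chain three applications of Theorem~\ref{th-2-2} through the tower of scalars $\HB \supset \CB \supset \R$, together with the two identities recalled just before the statement, namely $(\HB \otimes \xi)_{\CB} \cong \xi \oplus \bar\xi$ and $\bar\xi_{\R} \cong \xi_{\R}$. The observation that powers this is that the underlying real bundle of $(\HB \otimes \xi)(q)$ is isomorphic to the underlying real bundle of $\xi(2q)$, so both sides of the claimed equivalence are tethered to the same $\R$-vector bundle.

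Concretely, I would first compute the $\CB$-underlying bundle
\begin{equation*}
\bigl((\HB \otimes \xi)(q)\bigr)_{\CB} \;=\; \bigl((\HB \otimes \xi)_{\CB}\bigr)(q) \;\cong\; \xi(q) \oplus \bar\xi(q),
\end{equation*}
and then its $\R$-underlying bundle
\begin{equation*}
\bigl(\xi(q) \oplus \bar\xi(q)\bigr)_{\R} \;\cong\; \xi(q)_{\R} \oplus \xi(q)_{\R} \;=\; \xi(2q)_{\R},
\end{equation*}
using $\bar\xi_{\R} \cong \xi_{\R}$. Note that $\xi(2q)_{\R}$ is likewise the $\R$-underlying bundle of the $\CB$-vector bundle $\xi(2q)$.

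The proof then is a chain of biconditionals. By Theorem~\ref{th-2-2} applied to the $\HB$-vector bundle $(\HB \otimes \xi)(q)$ with $\K = \CB$, this bundle admits a stratified-algebraic structure if and only if $\xi(q) \oplus \bar\xi(q)$ does, as a $\CB$-vector bundle. Applying Theorem~\ref{th-2-2} again, this time with $\K = \R$, the latter holds if and only if the $\R$-vector bundle $\xi(2q)_{\R}$ admits a stratified-algebraic structure. A third application of Theorem~\ref{th-2-2}, now to the $\CB$-vector bundle $\xi(2q)$ with $\K = \R$, shows this is equivalent to $\xi(2q)$ admitting a stratified-algebraic structure, which closes the circuit.

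There is essentially no obstacle once one notices that Theorem~\ref{th-2-2} is exactly the tool that lets one compare strata-algebraicity across scalar extensions. The only delicate point is to keep straight which bundle is being regarded over which field at each step, in particular to verify that scalar restriction commutes with the formation of direct sums and of $r$-fold sums $(-)(r)$, so that $((\HB \otimes \xi)(q))_{\CB}$ and $(\xi(q) \oplus \bar\xi(q))_{\R}$ are computed correctly; but both commutations are immediate from the definitions.
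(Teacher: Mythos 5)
Your proof is correct and follows essentially the same route as the paper: both compute that $((\HB \otimes \xi)(q))_{\R} \cong (\xi(2q))_{\R}$ via the identities $(\HB\otimes\xi)_{\CB} \cong \xi\oplus\bar\xi$ and $\bar\xi_{\R}\cong\xi_{\R}$, and then invoke Theorem~\ref{th-2-2}. The only cosmetic difference is that you chain three applications of Theorem~\ref{th-2-2} through $\CB$, whereas the paper restricts both sides directly to $\R$ and applies it once to each.
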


\begin{proof}
Since
\begin{equation*}
( (\HB \otimes \xi)(q) )_{\CB} \cong (\HB \otimes
\xi)_{\CB} (q) \cong (\xi \oplus \bar{\xi})(q)
\end{equation*}
and
\begin{equation*}
( (\xi \oplus \bar{\xi})(q) )_{\R} \cong  (\xi_{\R} \oplus
\bar{\xi}_{\R})(q) \cong  (\xi_{\R} \oplus \xi_{\R})(q) \cong
(\xi(2q))_{\R},
\end{equation*}
we get
\begin{equation*}
( (\HB \otimes \xi)(q) )_{\R} \cong \xi(2q))_{\R}.
\end{equation*}
The proof is complete in view of Theorem~\ref{th-2-2}.
\end{proof}

For any $\R$-vector bundle $\xi$, we have
\begin{equation*}
(\CB \otimes \xi)_{\R} \cong \xi \oplus \xi.
\end{equation*}

\begin{lemma}\label{lem-2-4}
Let $X$ be a compact real algebraic variety and let
$\xi$ be a topological $\R$-vector bundle on $X$. For
any positive integer $q$, the $\CB$-vector bundle $(\CB
\otimes \xi)(q)$ admits a stratified-algebraic
structure if and only if so does the $\R$-vector bundle
$\xi(2q)$.
\end{lemma}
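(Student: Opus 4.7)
The plan is to mimic the proof of Lemma~\ref{lem-2-3} line by line, replacing the pair $(\HB, \CB)$ by $(\CB, \R)$ and using the identity $(\CB \otimes \xi)_{\R} \cong \xi \oplus \xi$ that has just been recalled in the paragraph preceding the statement. The whole point of Lemma~\ref{lem-2-3} is that stratified-algebraic structures are invariant under restriction of scalars down to $\R$ (Theorem~\ref{th-2-2}), so it is enough to check that the underlying real vector bundles on both sides of the equivalence agree up to isomorphism.

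First I would compute the underlying real vector bundle of $(\CB \otimes \xi)(q)$. Since the operations of taking $q$-fold direct sums and of restricting scalars to $\R$ commute with each other, and using $(\CB \otimes \xi)_{\R} \cong \xi \oplus \xi$, one gets
\begin{equation*}
\bigl((\CB \otimes \xi)(q)\bigr)_{\R} \;\cong\; \bigl((\CB \otimes \xi)_{\R}\bigr)(q) \;\cong\; (\xi \oplus \xi)(q) \;\cong\; \xi(2q).
\end{equation*}
On the other hand, $\xi$ is already an $\R$-vector bundle, so $(\xi(2q))_{\R} = \xi(2q)$. Hence
\begin{equation*}
\bigl((\CB \otimes \xi)(q)\bigr)_{\R} \;\cong\; \bigl(\xi(2q)\bigr)_{\R}.
\end{equation*}

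Finally I would invoke Theorem~\ref{th-2-2} twice, with $\K = \R$: the $\CB$-vector bundle $(\CB \otimes \xi)(q)$ admits a stratified-algebraic structure if and only if its underlying $\R$-vector bundle does, and likewise for $\xi(2q)$. Combined with the displayed isomorphism, this yields the equivalence asserted by the lemma.

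There is essentially no obstacle: the only thing to verify is the bookkeeping of direct sums and scalar restrictions, and both of the nontrivial ingredients (the identity $(\CB \otimes \xi)_{\R} \cong \xi \oplus \xi$ and Theorem~\ref{th-2-2}) are already in hand. The proof is therefore completely parallel to that of Lemma~\ref{lem-2-3}, only shorter, since $\xi$ is itself already real so no intermediate conjugate bundle needs to be introduced.
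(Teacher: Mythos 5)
Your argument is correct and is essentially identical to the paper's own proof: the paper likewise computes $((\CB\otimes\xi)(q))_{\R}\cong((\CB\otimes\xi)_{\R})(q)\cong(\xi\oplus\xi)(q)\cong\xi(2q)$ and then concludes by Theorem~\ref{th-2-2}. No issues.
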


\begin{proof}
Since
\begin{equation*}
( (\CB \otimes \xi)(q) )_{\R} \cong (\CB \otimes
\xi)_{\R}(q) \cong (\xi \oplus \xi)(q) \cong \xi(2q),
\end{equation*}
the proof is complete in view of Theorem~\ref{th-2-2}.
\end{proof}

For the convenience of the reader we recall the
definition and basic properties of
stratified-$\CB$-algebraic cohomology classes,
introduced and investigated in \cite{bib30}.

Let $V$ be a compact nonsingular real algebraic
variety. A \emph{nonsingular projective complexification} of
$V$ is a pair $(\V, \iota)$, where $\V$ is a
nonsingular projective scheme over $\R$ and $\iota
\colon V \to \V(\CB)$ is an injective map such that
$\V(\R)$ is Zariski dense in $\V$, $\iota(V) = \V(\R)$
and $\iota$ induces a biregular isomorphism between $V$
and $\V(\R)$. Here the set $\V(\R)$ of real points of
$\V$ is regarded as a subset of the set $\V(\CB)$ of
complex points of $\V$. The existence of $(\V,\iota)$
follows form Hironaka's theorem on resolution of
singularities \cite{bib19} (cf. also \cite{bib22} for a
very readable exposition). We identify $\V(\CB)$ with
the set of complex points of the scheme $\V_{\CB}
\coloneqq \V \times_{\Spec \R} \Spec \CB$ over $\CB$.
For any nonnegative integer $k$, denote by
$\Halg^{2k}(\V(\CB); \Z)$ the subgroup of
$H^{2k}(\V(\CB); \Z)$ that consists of the cohomology
classes corresponding to algebraic cycles (defined over
$\CB$) on $\V_{\CB}$ of codimension $k$, cf.
\cite{bib14} or \cite[Chapter~19]{bib17}. The subgroup
\begin{equation*}
\HCalg^{2k} (V; \Z) \coloneqq \iota^* (
\Halg^{2k}(\V(\CB); \Z) )
\end{equation*}
of $H^{2k}(V; \Z)$ does not depend on the choice of
$(\V; \iota)$, cf. \cite{bib6}. Cohomology classes in
$\HCalg^{2k}(V; \Z)$ are called \emph{$\CB$-algebraic}.
The groups $\HCalg^{2k}(-;\Z)$ are subtle invariants
with numerous applications, cf. \cite{bib6, bib8,
bib11, bib13, bib25}.

Let $X$ and $Y$ be real algebraic varieties. A map $f
\colon X \to Y$ is said to be \emph{stratified-regular}
if it is continuous and for some stratification $\SC$
of $X$, the restriction $f|_S \colon S \to Y$ of $f$ to
each stratum $S$ of $\SC$ is a regular map. A
cohomology class $u$ in $H^{2k}(X; \Z)$ is said to be
\emph{stratified-$\CB$-algebraic} if there exists a
stratified-regular map $\varphi \colon X \to
V$, into a compact nonsingular real algebraic variety $V$,
such that $u = \varphi^*(v)$ for some cohomology
class $v$ in $\HCalg^{2k}(V; \Z)$. The set
$\HCstr^{2k}(X; \Z)$ of all stratified-$\CB$-algebraic
cohomology classes in $H^{2k}(X; \Z)$ forms a subgroup.
The direct sum
\begin{equation*}
\HCstreven(X;\Z) \coloneqq \bigoplus_{k\geq 0}
\HCstr^{2k}(X;\Z)
\end{equation*}
is a subring of the ring
\begin{equation*}
\Heven(X;\Z) \coloneqq \bigoplus_{k\geq 0}
H^{2k}(X;\Z).
\end{equation*}
If $\xi$ is a stratified-algebraic $\CB$-vector bundle
on $X$, then the $k$th Chern class $c_k(\xi)$ of $\xi$
is in $\HCstr^{2k}(X;\Z)$ for every nonnegative integer
$k$. The reader can find proofs of these facts in
\cite{bib30}.

For any topological $\F$-vector bundle $\xi$ on $X$,
one can interpret $\rank \xi$ as an element of
$H^0(X;\Z)$. Then the following holds.

\begin{lemma}\label{lem-2-5}
Let $X$ be a real algebraic variety and let $\xi$ be a
topological $\F$-vector bundle on $X$. If $\xi$ has
property \rk, then $\rank \xi$ is in $\HCstr^0(X;\Z)$.
\end{lemma}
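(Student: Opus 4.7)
The plan is to realise $\rank \xi \in H^0(X;\Z)$ as the pullback, under a stratified-regular map, of a $\CB$-algebraic class on a $0$-dimensional compact nonsingular real algebraic variety.

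First I would observe that since $\xi$ is a topological vector bundle, $\rank \xi$ is locally constant, and since $X$ is semi-algebraic it has only finitely many connected components, so $\rank \xi$ takes only finitely many values $d_1, \ldots, d_m$. Property \rk then gives that each level set $R(d_i) = \{x \in X \mid (\rank \xi)(x) = d_i\}$ is algebraically constructible and at the same time open and closed in the Euclidean topology. Being constructible, each $R(d_i)$ is a finite disjoint union of Zariski locally closed subvarieties; amalgamating these I obtain a stratification $\SC$ of $X$ refining the partition $\{R(d_1), \ldots, R(d_m)\}$.

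Next I would choose $m$ distinct real numbers $q_1, \ldots, q_m$ and set $V = \{q_1, \ldots, q_m\} \subset \R$, viewed as a compact nonsingular $0$-dimensional real algebraic variety. Define $\varphi \colon X \to V$ by $\varphi(x) = q_i$ whenever $x \in R(d_i)$. Then $\varphi$ is Euclidean-continuous (each $R(d_i)$ is clopen) and constant on every stratum of $\SC$, so each restriction $\varphi|_S$ is a regular map; hence $\varphi$ is stratified-regular.

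To finish, I would take a nonsingular projective complexification $(\V,\iota)$ of $V$; then $\V(\CB) = \{q_1,\ldots,q_m\}$ is a disjoint union of $m$ reduced $\CB$-points, each of which is an algebraic cycle of codimension $0$ on $\V_{\CB}$, and the fundamental classes of these points generate $H^0(\V(\CB);\Z) \cong \Z^m$. Consequently $\HCalg^0(V;\Z) = H^0(V;\Z)$. Taking $v \in H^0(V;\Z)$ to be the locally constant function with $v(q_i) = d_i$, one has $\varphi^*(v) = \rank \xi$, which places $\rank \xi$ in $\HCstr^0(X;\Z)$. I do not anticipate any serious obstacle; the only mildly delicate point is recognising that the constant restrictions $\varphi|_S$ are automatically regular, which is immediate from the definition of a regular map.
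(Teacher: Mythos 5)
Your proposal is correct and follows essentially the same route as the paper: the paper likewise views the (finite) set of rank values as a compact nonsingular zero-dimensional real algebraic variety $V$, observes that $\rank\xi\colon X\to V$ is stratified-regular because the level sets are algebraically constructible, and concludes via $\HCalg^0(V;\Z)=H^0(V;\Z)$. The only cosmetic difference is that the paper takes $V=\{0,\ldots,n\}$ and uses $\rank\xi$ itself as the stratified-regular map, whereas you relabel the values as abstract points $q_1,\ldots,q_m$.
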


\begin{proof}
Assume that the $\F$-vector bundle $\xi$ has property
\rk. We make use of the notation introduced in the
proof of Lemma~\ref{lem-2-1}. Furthermore, we regard $V
= \{ 0, \ldots, n \}$ as a real algebraic variety and
$\rank \xi$ as a map
\begin{equation*}
\rank \xi \colon X \to V.
\end{equation*}
Then $\rank \xi$ is a stratified-regular map. Note that
$\rank \xi$ interpreted as a cohomology class in
$H^0(X; \Z)$ coincides with $(\rank\xi)^*(v)$, where $v$
is the cohomology class in $H^0(V;\Z)$ whose
restriction to the singleton $\{i\}$ is equal to $1$ in
$H^0(\{i\};\Z)$ for every $i$ in $V$. Since
$\HCalg^0(V;\Z) = H^0(V;\Z)$, the cohomology class
$(\rank\xi)^*(v)$ is in $\HCstr^0(X;\Z)$, as required.
\end{proof}

The following observation will prove to be useful.

\begin{proposition}\label{prop-2-6}
Let $X$ be a compact real algebraic variety. For a
topological $\CB$-vector bundle $\xi$ on $X$, the
following conditions are equivalent:
\begin{conditions}
\item\label{prop-2-6-a} There exists a positive integer
$r$ such that the $\CB$-vector bundle $\xi(r)$ admits a
stratified-algebraic structure.

\item\label{prop-2-6-b} The $\CB$-vector bundle $\xi$
has property \rk and for every positive integer $j$,
there exists a positive integer $b_j$ such that the
cohomology class $b_jc_j(\xi)$ is in
$\HCstr^{2j}(X;\Z)$.
\end{conditions}
\end{proposition}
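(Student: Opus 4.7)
The plan is to establish the two implications of Proposition~\ref{prop-2-6} separately.

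For (\ref{prop-2-6-a}) $\Rightarrow$ (\ref{prop-2-6-b}), I would first observe that property \rk transfers from $\xi(r)$ (which is stratified-algebraic, hence has property \rk) to $\xi$, because the level set $\{x \in X \mid (\rank\xi)(x) = d\}$ equals $\{x \in X \mid (\rank\xi(r))(x) = rd\}$. For the Chern classes, the Whitney sum formula $c(\xi(r)) = c(\xi)^r$ gives, in degree $2j$,
\begin{equation*}
c_j(\xi(r)) = r\, c_j(\xi) + Q_j\bigl(c_1(\xi), \ldots, c_{j-1}(\xi)\bigr),
\end{equation*}
with $Q_j$ a polynomial with integer coefficients. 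Since $c_j(\xi(r)) \in \HCstr^{2j}(X;\Z)$ and $\HCstreven(X;\Z)$ is a subring of $\Heven(X;\Z)$, induction on $j$ (multiplying by a suitable integer built from the previously chosen $b_i$'s to move $Q_j$ into $\HCstr^{2j}(X;\Z)$) yields a positive integer $b_j$ with $b_j c_j(\xi) \in \HCstr^{2j}(X;\Z)$.

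For the harder direction (\ref{prop-2-6-b}) $\Rightarrow$ (\ref{prop-2-6-a}), I would first reduce to $\xi$ of constant rank via Lemma~\ref{lem-2-1}, then work in rational $K$-theory. By Theorem~\ref{th-1-1}, producing some $r$ with $\xi(r)$ stratified-algebraic is equivalent to $r \llbracket \xi \rrbracket \in \KCstr(X)$, and since $K_\CB(X)$ is finitely generated, this amounts to showing $\llbracket \xi \rrbracket \otimes 1$ lies in $\KCstr(X) \otimes \Q$ inside $K_\CB(X) \otimes \Q$. Under the Atiyah--Hirzebruch Chern character isomorphism $\ch \colon K_\CB(X) \otimes \Q \xrightarrow{\cong} \Heven(X;\Q)$, the element $\ch(\xi)$ is a rational polynomial in $\rank\xi$ and the $c_j(\xi)$; the hypothesis of (\ref{prop-2-6-b}) together with the subring structure of $\HCstreven(X;\Z)$ places $\ch(\xi)$ inside $\HCstreven(X;\Q)$. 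It then suffices to prove that the restricted Chern character $\ch \colon \KCstr(X) \otimes \Q \to \HCstreven(X;\Q)$ is surjective. To do this, given any $u \in \HCstr^{2k}(X;\Z)$, I would invoke its defining presentation $u = \varphi^*(\iota^*(\tilde v))$ with $\tilde v \in \Halg^{2k}(\V(\CB);\Z)$, apply Grothendieck's theorem to realize $\tilde v$ rationally as the degree-$2k$ component of the Chern character of an algebraic $\CB$-vector bundle on $\V_\CB$, and pull back along $\iota$ followed by the stratified-regular map $\varphi$ to obtain a stratified-algebraic $\CB$-vector bundle on $X$ whose Chern character captures $u$ up to a nonzero rational multiple and lower-degree correction terms (handled by descending induction on $k$).

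The step I expect to be the main obstacle is the surjectivity of $\ch \colon \KCstr(X) \otimes \Q \to \HCstreven(X;\Q)$. This is the bridge from the cohomological hypothesis in (\ref{prop-2-6-b}) to the required $K$-theoretic conclusion, and it relies on Grothendieck's theorem on the projective complexification together with the functoriality of Chern classes under stratified-regular pullback and a careful bookkeeping of Chern character components across degrees.
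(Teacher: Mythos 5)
Your argument follows the paper's proof essentially step for step: property \rk passes from $\xi(r)$ to $\xi$, the Whitney formula plus induction and the subring property of $\HCstreven(X;\Z)$ give the integers $b_j$, and conversely the hypothesis (together with Lemma~\ref{lem-2-5} for the rank, which your reduction via Lemma~\ref{lem-2-1} replaces) places $\ch(\xi)$ in $\HCstreven(X;\Z)\otimes_{\Z}\Q$, after which one concludes via $K$-theory and Theorem~\ref{th-1-1}. The one step you flag as the main obstacle---that $\ch(\xi)\in\HCstreven(X;\Z)\otimes_{\Z}\Q$ forces $r\llbracket\xi\rrbracket\in\KCstr(X)$ for some $r$, essentially the surjectivity of the restricted Chern character---is precisely what the paper cites as \cite[Proposition~8.9]{bib30} rather than reproving, and your Grothendieck--Riemann--Roch sketch is a reasonable outline of how such a statement is established.
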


\begin{proof}
Assume that condition (\ref{prop-2-6-a}) is satisfied.
Then $\xi(r)$ has property \rk and hence $\xi$ has it
as well. Furthermore, the total Chern class $c(\xi(r))$
is in $\HCstreven(X;\Z)$. We have
\begin{equation*}
c(\xi(r)) = c(\xi) \cupproduct \cdots \cupproduct
c(\xi),
\end{equation*}
where the right-hand-side is the $r$-fold cup product.
In particular, $c_1(\xi(r)) = rc_1(\xi)$ is in
$\HCstr^2(X;\Z)$. By induction, for every positive
integer $j$, we can find a positive integer $b_j$ such
that the cohomology class $b_jc_j(\xi)$ is in
$\HCstr^{2j}(X;\Z)$. Thus (\ref{prop-2-6-a}) implies
(\ref{prop-2-6-b}).

Now assume that condition (\ref{prop-2-6-b}) is
satisfied. Since $\xi$ has property \rk, by
Lemma~\ref{lem-2-5},~$\rank\xi$ is in
$\HCstr^0(X;\Z)$. Hence (\ref{prop-2-6-b}) implies that
the Chern character $\ch(\xi)$ is in ${\HCstreven(X;\Z)
\otimes_{\Z} \Q}$.\linebreak Consequently, for some positive
integer $r$, the class $r \llbracket \xi \rrbracket =
\llbracket \xi(r) \rrbracket$ is in $\KCstr(X)$, cf.
\cite[Proposition~8.9]{bib30}. According to
Theorem~\ref{th-1-1}, the $\CB$-vector bundle $\xi(r)$
admits a stratified-algebraic structure. Thus
(\ref{prop-2-6-b}) implies (\ref{prop-2-6-a}), which
completes the proof.
\end{proof}

We now collect some results on spherical cohomology
classes. Every compact real algebraic variety is
triangulable \cite[p.~217]{bib7} and hence a result due
to Serre can be stated as follows.

\begin{proposition}[{\cite[p.~289,
Propoposition~$2'$]{bib32}}]\label{prop-2-7}
Let $X$ be a compact real algebraic variety. Then there
exists a positive integer $a$ such that for every
positive integer $d$ satisfying
\begin{equation*}
\dim X \leq 2d-2
\end{equation*}
and every cohomology class $u$ in $H^d(X;\Z)$, the
cohomology class $au$ is spherical. In particular, the
inclusion
\begin{equation*}
aH^d(X;\Z) \subseteq \Hsph^d(X;\Z)
\end{equation*}
holds for such $a$ and $d$.
\end{proposition}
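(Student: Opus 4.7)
The plan is to derive this via obstruction theory along the Postnikov tower of $\SB^d$, invoking Serre's classical finiteness theorem for the homotopy groups of spheres in the metastable range.

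First I would use triangulability of $X$ (as cited from \cite{bib7}) to regard it as a finite CW complex of dimension $n = \dim X$. The condition $n \leq 2d-2$ is satisfied only by the finitely many integers $d \geq (n+2)/2$, and for each such $d$ the group $H^d(X;\Z)$ is finitely generated. It therefore suffices to produce, for each admissible $d$ and each $u \in H^d(X;\Z)$, a positive integer $a_u$ with $a_u u$ spherical; the uniform $a$ in the statement is then obtained as a common multiple over a finite generating set of the (finitely many) relevant cohomology groups.

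Next, interpret $u$ as a homotopy class of maps $u \colon X \to K(\Z,d)$. Being spherical amounts to $u$ factoring through the natural map $\SB^d \to K(\Z,d)$ representing the generator $s_d$. I would consider the Postnikov tower $\ldots \to P_{j+1} \to P_j \to \ldots \to P_d = K(\Z,d)$ of $\SB^d$; since $\SB^d \to P_j$ is a $(j+1)$-equivalence, for $j \geq n$ one has $[X,\SB^d] = [X,P_j]$. Each stage $P_{j+1} \to P_j$ is a principal fibration with fiber $K(\pi_{j+1}(\SB^d), j+1)$, and the obstruction to lifting a given map $X \to P_j$ to $X \to P_{j+1}$ lies in $H^{j+2}(X;\pi_{j+1}(\SB^d))$, which vanishes automatically once $j+2 > n$. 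Thus only the stages $d \leq j \leq n-2$ contribute potentially nonzero obstructions.

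The hypothesis $n \leq 2d-2$ confines the relevant coefficient indices to $d < j+1 \leq n-1 \leq 2d-3 < 2d-1$. By Serre's theorem the groups $\pi_i(\SB^d)$ are finite whenever $d < i < 2d-1$ (the one infinite higher homotopy group in this regime, $\pi_{2d-1}(\SB^d)$ for even $d$, lies just outside our range). Consequently all the relevant obstruction groups $H^{j+2}(X;\pi_{j+1}(\SB^d))$ are finite abelian. A standard scaling argument then finishes the proof: replacing $u$ by $au$ propagates through the tower via naturality under the self-map $\times a$ of $K(\Z,d)$, so that for $a$ a sufficiently large common multiple of the exponents of these finitely many finite groups the obstructions are killed stage by stage and $au$ lifts to a map $X \to \SB^d$.

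The main obstacle is this last step: obstructions at later Postnikov stages depend on the choice of lift made at earlier ones, not only on the initial class $u$, so the effect of the rescaling $u \mapsto au$ must be tracked through the whole tower together with the torsor structure on the set of lifts. The full uniformity statement is exactly Serre's Proposition~$2'$ on p.~289 of \cite{bib32}, whose proof organizes the above bookkeeping via the mod-$\mathcal{C}$ Hurewicz and Whitehead theorems for the Serre class of finite abelian groups.
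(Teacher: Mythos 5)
Your proposal is correct and ultimately rests on exactly the same foundation as the paper, which offers no proof of this proposition beyond observing that $X$ is triangulable and citing Serre's Proposition~$2'$; your Postnikov-tower outline of why Serre's theorem holds (finiteness of $\pi_j(\SB^d)$ for $d<j<2d-1$, which is the range forced by $\dim X\leq 2d-2$) is a correct sketch of the standard argument, and you rightly flag that the stage-by-stage scaling bookkeeping is precisely what the cited mod-$\mathcal{C}$ machinery is designed to handle. One minor slip: the condition $d\geq(\dim X+2)/2$ is satisfied by infinitely many $d$, not finitely many; what makes the problem finite is that $H^d(X;\Z)=0$ for $d>\dim X$, so only the range $(\dim X+2)/2\leq d\leq\dim X$ contributes.
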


Let $X$ and $Y$ be real algebraic varieties. A map $f
\colon X \to Y$ is said to be \emph{continuous
rational} if it is continuous and its restriction to
some Zariski open and dense subvariety of $X$ is a
regular map. Assuming that the variety $X$ is
nonsingular, the map $f$ is continuous rational if and
only if it is stratified-regular, cf.
\cite[Proposition~8]{bib23} and
\cite[Remark~2.3]{bib30}.

\begin{lemma}\label{lem-2-8}
Let $X$ be a compact nonsingular real algebraic variety
and let $d$ be a positive integer. For any continuous
map $h \colon X \to \SB^d$ and any continuous map
$\varphi \colon \SB^d \to \SB^d$ of (topological)
degree $2$, the composite map $\varphi \circ h \colon X
\to \SB^d$ is homotopic to a stratified-regular map.
\end{lemma}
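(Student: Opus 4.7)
The plan is to reduce to a specific regular representative of the degree-$2$ homotopy class on $\SB^d$, and then combine Stone--Weierstrass approximation with a rational extension that absorbs the normalization.

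\textbf{First}, I would fix the explicit regular self-map $\varphi_0 \colon \SB^d \to \SB^d$ defined for $(y_0, \vec{y}) \in \R \times \R^d$ with $y_0^2 + |\vec{y}|^2 = 1$ by
\[
\varphi_0(y_0, \vec{y}) = (y_0^2 - |\vec{y}|^2,\ 2 y_0 \vec{y}).
\]
The identity $y_0^2 + |\vec{y}|^2 = 1$ gives $\|\varphi_0(y_0,\vec{y})\|^2 = (y_0^2 + |\vec{y}|^2)^2 = 1$, so $\varphi_0$ is well-defined; in the latitude parametrization $(y_0, \vec{y}) = (\cos\theta, \sin\theta \cdot \omega)$ with $\omega \in \SB^{d-1}$ one has $\varphi_0 = (\cos 2\theta,\ \sin 2\theta \cdot \omega)$, which makes the degree equal to $2$ transparent. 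Since $\pi_d(\SB^d) \cong \Z$ is classified by the degree, the given $\varphi$ is homotopic to $\varphi_0$, so $\varphi \circ h \simeq \varphi_0 \circ h$ and it suffices to homotope $\varphi_0 \circ h$ to a stratified-regular map.

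\textbf{Second}, the crucial algebraic input: $\varphi_0$ extends to a regular map $\widetilde{\varphi_0} \colon \R^{d+1} \setminus \{0\} \to \SB^d$ given by
\[
\widetilde{\varphi_0}(v_0, \vec{v}) = \frac{(v_0^2 - |\vec{v}|^2,\ 2 v_0 \vec{v})}{v_0^2 + |\vec{v}|^2},
\]
because the denominator is a nowhere-vanishing polynomial on $\R^{d+1} \setminus \{0\}$ and the numerator has squared norm $(v_0^2 + |\vec{v}|^2)^2$, placing the image on $\SB^d$. This is the step in which the degree-$2$ hypothesis is used essentially: the fact that the coordinates of $\varphi_0$ are bihomogeneous polynomials of degree $2$ is exactly what allows the radial normalization to survive as a regular map on all of $\R^{d+1} \setminus \{0\}$. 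For a map of degree $\pm 1$ this extension cannot be achieved without introducing a pole at the origin.

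\textbf{Third}, embed $X \subset \R^N$ as an algebraic subset. Since $X$ is compact, the Stone--Weierstrass theorem furnishes a regular map $p \colon X \to \R^{d+1}$ (obtained by restricting a polynomial map from $\R^N$) with $\sup_{x \in X} \|p(x) - h(x)\| < \tfrac{1}{2}$. Then $\|p(x)\| \geq 1 - \tfrac{1}{2} > 0$ for every $x$, so $p$ takes values in $\R^{d+1} \setminus \{0\}$ and
\[
g \coloneqq \widetilde{\varphi_0} \circ p \colon X \to \SB^d
\]
is a regular map, hence stratified-regular. The linear homotopy $(1-t)p + th$, $t \in [0,1]$, stays in $\R^{d+1} \setminus \{0\}$ by the same norm estimate, so $\widetilde{\varphi_0} \circ ((1-t) p + t h)$ yields a homotopy from $g$ to $\widetilde{\varphi_0} \circ h = \varphi_0 \circ h \simeq \varphi \circ h$, completing the argument. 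The only potential obstacle is the construction of the algebraic extension $\widetilde{\varphi_0}$, and it is precisely the degree-$2$ hypothesis that makes it available; nonsingularity of $X$, although assumed in the statement, does not enter this proof.
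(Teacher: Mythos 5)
Your proof has a fatal gap: the map $\varphi_0(y_0,\vec y)=(y_0^2-|\vec y|^2,\,2y_0\vec y)$ does \emph{not} have degree $2$ when $d$ is even. Since $\varphi_0(-y_0,-\vec y)=\varphi_0(y_0,\vec y)$, the map factors through the antipodal quotient $\PB^d(\R)$; for $d$ even this space is non-orientable, $H_d(\PB^d(\R);\Z)=0$, and hence $\deg\varphi_0=0$. One sees the same thing from your latitude formula: a regular value has exactly two (antipodal) preimages, and because the antipodal map of $\SB^d$ has degree $(-1)^{d+1}$ the two local degrees are $1$ and $(-1)^{d+1}$, giving total degree $1+(-1)^{d+1}$ --- equal to $2$ only for $d$ odd (e.g.\ $d=1$, where $\varphi_0$ is $z\mapsto z^2$). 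This is not a repairable edge case: the paper invokes Lemma~\ref{lem-2-8} exclusively for even spheres $\SB^{2k}$ (Remark~\ref{rem-2-9}, Lemma~\ref{lem-2-12}), so your argument fails in precisely the cases that matter. For even $d$ your construction correctly shows that $\varphi_0\circ h$ is homotopic to a regular map, but only because $\varphi_0$ is then null-homotopic, which says nothing about a genuine degree-$2$ map $\varphi$. Your remark that the nonsingularity of $X$ is not used should itself have been a warning sign.

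The underlying difficulty is that producing a \emph{regular} self-map of $\SB^{2k}$ of degree $2$ (let alone one extending homogeneously to $\R^{2k+1}\setminus\{0\}$ so that it can absorb a polynomial approximation of $h$) is not something you can write down by a quadratic formula. The paper's proof takes a different, geometric route: pick a smooth degree-$2$ map $\psi$ with $\psi^{-1}(y)=\{y,z\}$, so that $(\psi\circ h)^{-1}(y)=h^{-1}(y)\cup h^{-1}(z)$ bounds the manifold $h^{-1}(A)$ (for an arc $A$ joining $y$ to $z$) with trivial normal bundle; this exhibits the fiber as $F^{-1}(0)$ for a smooth $F\colon X\to\R^d$, which is then Weierstrass-approximated by a regular map $G$, making the fiber isotopic to the nonsingular algebraic set $G^{-1}(0)$; one concludes via \cite[Theorem~2.4]{bib24} that $\psi\circ h$ is homotopic to a continuous rational map, which is stratified-regular because $X$ is nonsingular. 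There the degree-$2$ hypothesis enters through the null-cobordism of the double fiber, and the nonsingularity of $X$ is used essentially --- both in the transversality/isotopy step and in identifying continuous rational maps with stratified-regular ones.
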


\begin{proof}
We may assume without loss of generality that $h$ is a
$\C^{\infty}$ map. By Sard's theorem, $h$ is transverse
to each point in some open subset $U$ of $\SB^d$
diffeomorphic to $\R^d$. Let $y$ and $z$ be distinct
points in $U$, and let $A$ be a $\C^{\infty}$ arc in
$U$ joining $y$ and $z$. Then
\begin{equation*}
M \coloneqq h^{-1}(y) \cup h^{-1}(z)
\end{equation*}
is a compact $\C^{\infty}$ submanifold of $X$.
Furthermore, $B \coloneqq h^{-1}(A)$ is a compact
$\C^{\infty}$ manifold with boundary $\partial B = M$,
embedded in $X$ with trivial normal bundle. Hence,
according to \cite[Theorem~1.12]{bib12}, there exists a
$\C^{\infty}$ map $F \colon X \to \R^d$ transverse to
$0$ in $\R^d$ and such that
\begin{equation*}
M = F^{-1}(0).
\end{equation*}
By the Weierstrass approximation theorem, the
$\C^{\infty}$ map $F$ can be approximated, in the
$\C^{\infty}$ topology, by a regular map $G \colon X
\to \R^d$. If $G$ is sufficiently close to $F$, then
$G$ is transverse to $0$ and
\begin{equation*}
V \coloneqq G^{-1}(0)
\end{equation*}
is a nonsingular Zariski closed subvariety of $X$.
Furthermore, $V$ is isotopic to $M$ in $X$, cf.
\cite[Theorem~20.2]{bib1}.

We can choose a $\C^{\infty}$ map $\psi \colon \SB^d
\to \SB^d$ of degree $2$ that is transverse to $y$ and
satisfies $\psi^{-1}(y) = \{y,z\}$. By Hopf's theorem,
$\psi$ is homotopic to $\varphi$. Consequently, the
maps $\varphi \circ h$ and $\psi \circ h$ are homotopic.
It suffices to prove that $\psi \circ h$ is homotopic
to a stratified-regular map. By construction, the map
$\psi \circ h$ is transverse to $y$ and
\begin{equation*}
(\psi \circ h)^{-1} (y) = h^{-1} (\psi^{-1}(y)) =
h^{-1}(y) \cup h^{-1}(z) = M.
\end{equation*}
Since $M$ is isotopic to $V$, according to
\cite[Theorem~2.4]{bib24}, the map $\psi \circ h$ is
homotopic to a continuous rational map $f \colon X \to
\SB^d$. The map $f$ is stratified-regular, the variety
$X$ being nonsingular.
\end{proof}

As a consequence, we obtain the following observation.

\begin{remark}\label{rem-2-9}
For any compact nonsingular real algebraic variety $X$,
the inclusion
\begin{equation*}
2 \Hsph^{2k}(X;\Z) \subseteq \HCstr^{2k}(X;\Z)
\end{equation*}
holds for every positive integer $k$. Indeed, it
suffices to prove that for any spherical cohomology
class $u$ in $H^{2k}(X;\Z)$, the cohomology class $2u$
is in $\HCstr^{2k}(X; \Z)$. To this end, let $h \colon X
\to \SB^{2k}$ be a continuous map with $h^*(s_{2k}) =
u$ and let $\varphi \colon \SB^{2k} \to \SB^{2k}$ be a
continuous map of degree $2$. Then
\begin{equation*}
(\varphi \circ h)^* (s_{2k}) = h^*( \varphi^*(s_{2k}) )
= h^*(2s_{2k}) = 2u.
\end{equation*}
Recall that $\HCalg^{2k} (\SB^{2k}; \Z) = H^{2k}
(\SB^{2k}; \Z)$, cf. \cite[Proposition~4.8]{bib6}.
Since, according to Lem\-ma~2.8, the map $\varphi \circ
h$ is homotopic to a stratified-regular map, it follows
that the cohomology class $2u$ is in
$\HCstr^{2k}(X;\Z)$.
\end{remark}

It would be interesting to decide whether the
nonsingularity of $X$ in Remark~\ref{rem-2-9} is
essential. Dropping the nonsingularity assumption, we
obtain below a weaker but useful result,
Lemma~\ref{lem-2-12}. First some preparation is
necessary.

By a \emph{multiblowup} of a real algebraic variety $X$
we mean a regular map $\pi \colon X' \to X$ which is
the composition of a finite collection of blowups with
nonsingular centers. If $C$ is a Zariski closed
subvariety of $X$ and the restriction $\pi_C \colon X'
\setminus \pi^{-1}(C) \to X \setminus C$ of $\pi$ is a
biregular isomorphism, then we say that the multiblowup
$\pi$ is \emph{over} $C$.

A \emph{filtration} of $X$ is a finite sequence $\FC =
(X_{-1}, X_0, \ldots, X_m)$ of Zariski closed
subvarieties satisfying
\begin{equation*}
\varnothing = X_{-1} \subseteq X_0 \subseteq \cdots
\subseteq X_m = X.
\end{equation*}

We will make use of the following result.

\begin{theorem}[{\cite[Theorem~5.4]{bib30}}]\label{th-2-10}
Let $X$ be a compact real algebraic variety. For a
topological $\F$-vector bundle $\xi$ on $X$, the
following conditions are equivalent:
\begin{conditions}
\item\label{th-2-10-a} The $\F$-vector bundle $\xi$
admits a stratified-algebraic structure.

\item\label{th-2-10-b} There exists a filtration $\FC =
(X_{-1}, X_0, \ldots, X_m)$ of $X$, and for each $i =
0, \ldots, m$, there exists a multiblowup $\pi_i \colon
X'_i \to X_i$ over $X_{i-1}$ such that the pullback
$\F$-vector bundle $\pi_i^* (\xi|_{X_i})$ on $X'_i$
admits a stratified-algebraic structure.
\end{conditions}
\end{theorem}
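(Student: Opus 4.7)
The plan is to prove the two implications separately, with $(\ref{th-2-10-b}) \Rightarrow (\ref{th-2-10-a})$ being the substantive direction. For $(\ref{th-2-10-a}) \Rightarrow (\ref{th-2-10-b})$, take the trivial filtration $X_{-1} = \varnothing \subseteq X_0 = X$ with $\pi_0 = \mathrm{id}_X$ (a multiblowup over $\varnothing$), for which $\pi_0^*(\xi|_{X_0}) = \xi$ admits a stratified-algebraic structure by hypothesis.

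For $(\ref{th-2-10-b}) \Rightarrow (\ref{th-2-10-a})$, I would proceed by induction on $i$, proving that $\xi|_{X_i}$ admits a stratified-algebraic structure for every $i = 0, \ldots, m$; the case $i = m$ is the desired conclusion. The base case $i = 0$ is essentially trivial: since the multiblowup $\pi_0 \colon X_0' \to X_0$ is over $X_{-1} = \varnothing$, it is by definition a biregular isomorphism, and the stratified-algebraic structure on $\pi_0^*(\xi|_{X_0})$ transports via $\pi_0^{-1}$ to one on $\xi|_{X_0}$. The inductive step reduces to the following gluing principle, applied with $Z = X_i$, $Y = X_{i-1}$, $\pi = \pi_i$, $\zeta = \xi|_{X_i}$: if $Z$ is a compact real algebraic variety, $Y \subseteq Z$ a Zariski closed subvariety, $\pi \colon Z' \to Z$ a multiblowup over $Y$, and $\zeta$ a topological $\F$-vector bundle on $Z$ such that both $\zeta|_Y$ and $\pi^*\zeta$ admit stratified-algebraic structures, then $\zeta$ admits a stratified-algebraic structure on $Z$.

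To prove this gluing principle, one first uses that $\pi$ restricts to a biregular isomorphism $Z' \setminus \pi^{-1}(Y) \to Z \setminus Y$ to transport the stratified-algebraic structure on $\pi^*\zeta$ to a stratified-algebraic structure on $\zeta|_{Z \setminus Y}$. Combined with the given structure on $\zeta|_Y$, this furnishes, on each stratum of a suitable common refinement of $Z$, an algebraic subbundle of a trivial bundle. The main obstacle is that the two local subbundle realizations sit inside distinct ambient trivial bundles over $Y$ and over $Z \setminus Y$, and the two topological isomorphisms to $\zeta$ need not match near the boundary $Y$. The remedy I would pursue is to embed both realizations into a common $\varepsilon_Z^N(\F)$ after adding trivial summands, and then to use compactness of $Z$ together with a neighborhood-retraction argument around $Y$ (available since $Z$ is triangulable) to build a global topological isomorphism between $\zeta$ and the resulting stratified-algebraic subbundle of $\varepsilon_Z^N(\F)$. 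This reconciliation near $Y$, rather than the purely algebraic descent on $Z \setminus Y$, is the heart of the proof and the step I expect to be most delicate.
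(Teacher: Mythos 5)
Note first that the paper does not prove this statement at all: Theorem~\ref{th-2-10} is imported verbatim from \cite[Theorem~5.4]{bib30}, so there is no in-paper argument to compare against; the actual proof in \cite{bib30} is a substantial piece of machinery (classifying maps into Grassmannians, approximation and extension theorems for stratified-regular maps relative to a closed subvariety, and descent along the exceptional locus of a blowup). Against that background, your reduction is sound as far as it goes: the implication (\ref{th-2-10-a})$\Rightarrow$(\ref{th-2-10-b}) via the trivial filtration is correct, and the induction on $i$ correctly reduces (\ref{th-2-10-b})$\Rightarrow$(\ref{th-2-10-a}) to your ``gluing principle.'' But observe that the gluing principle is precisely the case $m=1$ of the theorem (take the filtration $(\varnothing, Y, Z)$ with $\pi_0=\mathrm{id}_Y$ and $\pi_1=\pi$), so at this point you have reduced the theorem to itself; the entire content of the hard direction is concentrated in the gluing step, and that step is not proved.

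The sketch you offer for the gluing step does not survive scrutiny. Two concrete problems. First, a neighborhood retraction $r\colon U\to Y$ is only continuous, so $r^*$ of a stratified-algebraic subbundle of $\varepsilon_Y^{n_2}(\F)$ is merely a topological subbundle of $\varepsilon_U^{n_2}(\F)$; it carries no stratified-algebraic structure on $U\setminus Y$, so the retraction buys you nothing toward the required algebraicity near $Y$. Second, and more fundamentally, a stratified-algebraic $\F$-vector bundle on $Z$ must be a \emph{globally continuous} topological subbundle of some $\varepsilon_Z^N(\F)$ that happens to be algebraic on each stratum; placing the realization over $Y$ and the realization over $Z\setminus Y$ inside a common ambient trivial bundle does not make their union a continuous subbundle across $Y$. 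The realization over $Z\setminus Y$ is obtained by transporting a subbundle $E'\subseteq\varepsilon_{Z'}^{n_1}(\F)$ from the compact $Z'$ through the biregular isomorphism $Z'\setminus\pi^{-1}(Y)\to Z\setminus Y$, but the fibers of $\pi$ over points of $Y$ are positive-dimensional, and neither $E'$ nor the topological isomorphism $E'\cong\pi^*\zeta$ need be constant along those fibers; consequently the transported subbundle of $\varepsilon_{Z\setminus Y}^{n_1}(\F)$ has no limit behavior along $Y$ that you control, and no amount of adding trivial summands or invoking compactness repairs this. A correct argument must modify the structure on $Z'$ (or homotope the relevant classifying map) so that it genuinely descends near the exceptional locus, which is exactly the technical core of \cite[Section~5]{bib30} and is absent from your proposal.
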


We now derive the following.

\begin{lemma}\label{lem-2-11}
Let $X$ be a compact real algebraic variety. Let $d$ be
a positive integer and let $\theta$ be a topological
$\F$-vector bundle on $\SB^d$. For any continuous map
$h \colon X \to \SB^d$ and any continuous map $\varphi
\colon \SB^d \to \SB^d$ of degree $2$, the pullback
$\F$-vector bundle $(\varphi \circ h)^*\theta$ on $X$
admits a stratified-algebraic structure.
\end{lemma}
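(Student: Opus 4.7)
The plan is to apply Theorem~\ref{th-2-10} to reduce the problem to a nonsingular setting in which Lemma~\ref{lem-2-8} can be invoked, combined with the fact that every topological $\F$-vector bundle on $\SB^d$ admits an algebraic structure. Set $\xi = (\varphi \circ h)^*\theta$; the goal is to exhibit a filtration of $X$ and multiblowups witnessing condition (\ref{th-2-10-b}) of Theorem~\ref{th-2-10} for $\xi$.

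First I would use Hironaka's resolution of singularities to construct a filtration $\FC = (X_{-1}, X_0, \ldots, X_m)$ of $X$ together with, for each $i = 0, \ldots, m$, a multiblowup $\pi_i \colon X'_i \to X_i$ over $X_{i-1}$ with $X'_i$ compact and nonsingular. One natural choice is to take $X_0$ to be the singular locus of $X$, then iteratively let $X_i$ contain the singular locus of $X_{i-1}$, so that applying Hironaka to the pair $(X_i, X_{i-1})$ yields a multiblowup $\pi_i$ of the required form.

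Next, for each $i$ consider the composite continuous map
\begin{equation*}
\varphi \circ (h|_{X_i} \circ \pi_i) \colon X'_i \to \SB^d.
\end{equation*}
Since $X'_i$ is compact and nonsingular, Lemma~\ref{lem-2-8} guarantees that this composite is homotopic to a stratified-regular map $g_i \colon X'_i \to \SB^d$. Because homotopic maps have isomorphic pullbacks of topological vector bundles, there is an isomorphism $\pi_i^*(\xi|_{X_i}) \cong g_i^*\theta$ of topological $\F$-vector bundles on $X'_i$. At this stage I would appeal to the classical fact that every topological $\F$-vector bundle on $\SB^d$ admits an algebraic (in particular, stratified-algebraic) structure; since the pullback of a stratified-algebraic bundle under a stratified-regular map is stratified-algebraic (by intersecting the two defining stratifications and noting that each regular restriction pulls back an algebraic subbundle to an algebraic subbundle), $g_i^*\theta$ admits a stratified-algebraic structure on $X'_i$. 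Theorem~\ref{th-2-10} then delivers the conclusion that $\xi$ admits a stratified-algebraic structure on $X$.

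The main obstacle I anticipate is the last ingredient: verifying or carefully citing that every topological $\F$-vector bundle on $\SB^d$ admits an algebraic structure. Once this input is granted, the remainder of the argument is a systematic assembly of tools already in place, namely the Hironaka-style filtration, the passage to nonsingular strata via multiblowups, Lemma~\ref{lem-2-8} to upgrade $\varphi \circ h$ to a stratified-regular representative, and Theorem~\ref{th-2-10} to patch the pieces together.
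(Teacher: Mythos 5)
Your proposal is correct and follows essentially the same route as the paper's proof: a filtration with nonsingular differences, Hironaka multiblowups to nonsingular $X'_i$, Lemma~\ref{lem-2-8} applied to $\varphi\circ h\circ e_i\circ\pi_i$, the classical fact that every topological $\F$-vector bundle on $\SB^d$ admits an algebraic structure (the paper cites Swan and Bochnak--Coste--Roy for this), and Theorem~\ref{th-2-10} to reassemble. The one ingredient you flagged as a possible obstacle is exactly the standard citation the paper supplies, so nothing further is needed.
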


\begin{proof}
Let $\FC = (X_{-1}, X_0, \ldots, X_m)$ be a filtration
of $X$ such that the variety $X_i \setminus X_{i-1}$ is
nonsingular for $0 \leq i \leq m$. According to
Hironaka's theorem on resolution of singularities
\cite{bib19, bib22}, for each $i = 0, \ldots, m$, there
exists a multiblowup $\pi_i \colon X'_i \to X_i$ over
$X_{i-1}$ with $X'_i$ nonsingular. In view of
Theorem~\ref{th-2-10}, the $\F$-vector bundle $\xi
\coloneqq (\varphi \circ h)^*\theta$ on $X$ admits a
stratified-algebraic structure if and only if the
$\F$-vector bundle $\xi_i \coloneqq
\pi_i^*(\xi|_{X_i})$ on $X'_i$ admits a
stratified-algebraic structure for $0 \leq i \leq m$.
If $e_i \colon X_i \hookrightarrow X$ is the inclusion
map, then
\begin{equation*}
\xi_i = \pi_i^* (e_i^*\xi) = \pi_i^* ( e_i^* ( (\varphi
\circ h)^* \theta ) ) = ( \varphi \circ h \circ e_i
\circ \pi_i)^* \theta.
\end{equation*}
Since the variety $X'_i$ is nonsingular and the map $h
\circ e_i \circ \pi_i \colon X'_i \to \SB^d$ is
continuous, according to Lemma~\ref{lem-2-8}, the map
$\varphi \circ h \circ e_i \circ \pi_i$ is homotopic to
a stratified-regular map ${f_i \colon X'_i \to \SB^d}$.
In particular, $\xi_i \cong f_i^*\theta$. We may assume
that the $\F$-vector bundle $\theta$ is algebraic since
each topological $\F$-vector bundle on $\SB^d$ admits
an algebraic structure, cf. \cite[Theorem~11.1]{bib34}
and \cite[Proposition~12.1.12; pp.~325, 326, 352]{bib7}.
Thus $f_i^*\theta$ is a stratified-algebraic
$\F$-vector bundle on $X'_i$. Consequently, the
$\F$-vector bundle $\xi_i$ admits a
stratified-algebraic structure, as required.
\end{proof}

Here is the result we have already alluded to in the
comment following Remark~\ref{rem-2-9}.

\begin{lemma}\label{lem-2-12}
For any compact real algebraic variety $X$, the
inclusion
\begin{equation*}
2 (k-1)! \Hsph^{2k} (X;\Z) \subseteq \HCstr^{2k}(X;\Z).
\end{equation*}
holds for every positive integer $k$.
\end{lemma}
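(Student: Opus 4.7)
The plan is to imitate the strategy of Remark~\ref{rem-2-9}, but instead of relying on the map $\SB^{2k} \to \SB^{2k}$ to transfer information (which implicitly used nonsingularity via Lemma~\ref{lem-2-8}), use Lemma~\ref{lem-2-11} directly to obtain a stratified-algebraic $\CB$-vector bundle on $X$ whose top Chern class is a prescribed multiple of a given spherical class. It is enough to check the inclusion for generators, so I would fix a spherical class $u \in H^{2k}(X;\Z)$, written $u = h^*(s_{2k})$ for some continuous $h \colon X \to \SB^{2k}$, and show $2(k-1)! \, u \in \HCstr^{2k}(X;\Z)$.

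The key external ingredient is the classical Bott-theoretic fact that on $\SB^{2k}$ the homomorphism
\begin{equation*}
c_k \colon \tilde{K}^0(\SB^{2k}) \longrightarrow H^{2k}(\SB^{2k};\Z) \cong \Z \cdot s_{2k}
\end{equation*}
has image exactly $(k-1)! \, \Z \cdot s_{2k}$; in particular there exists a topological $\CB$-vector bundle $\theta$ on $\SB^{2k}$ with $c_k(\theta) = (k-1)! \, s_{2k}$. (This comes from the Chern character computation: on $\SB^{2k}$ only $c_k$ and the rank can be nonzero, and $\mathrm{ch}_k = \pm c_k / (k-1)!$ must be integral, with equality achieved by the Bott generator; if needed, one passes to the conjugate bundle to fix the sign.) In parallel I would pick a continuous map $\varphi \colon \SB^{2k} \to \SB^{2k}$ of degree $2$, so that $\varphi^*(s_{2k}) = 2 s_{2k}$.

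Now apply Lemma~\ref{lem-2-11} with this $\theta$, $h$, and $\varphi$: the pullback $\CB$-vector bundle $\xi \coloneqq (\varphi \circ h)^* \theta$ on $X$ admits a stratified-algebraic structure. By the basic properties of $\HCstreven$ recalled just before Lemma~\ref{lem-2-5}, Chern classes of stratified-algebraic $\CB$-vector bundles lie in $\HCstreven$, so $c_k(\xi) \in \HCstr^{2k}(X;\Z)$. Naturality of Chern classes then gives
\begin{equation*}
c_k(\xi) \;=\; (\varphi \circ h)^*(c_k(\theta)) \;=\; h^*\bigl(\varphi^*((k-1)! \, s_{2k})\bigr) \;=\; h^*(2(k-1)! \, s_{2k}) \;=\; 2(k-1)! \, u,
\end{equation*}
which yields the inclusion.

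The main conceptual obstacle, aside from invoking Lemma~\ref{lem-2-11} in the right form, is locating the factor $(k-1)!$ correctly; this is purely a fact about complex K-theory of spheres and is classical, so no real difficulty arises once that ingredient is on the table. The factor $2$ in the final estimate is inherited from the degree of $\varphi$, exactly as in Remark~\ref{rem-2-9}.
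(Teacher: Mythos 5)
Your proposal is correct and follows essentially the same route as the paper's own proof: reduce to a single spherical class $u = h^*(s_{2k})$, compose with a degree-$2$ self-map $\varphi$ of $\SB^{2k}$, pull back a bundle $\theta$ on $\SB^{2k}$ with $c_k(\theta) = (k-1)!\,s_{2k}$, and invoke Lemma~\ref{lem-2-11} together with the fact that Chern classes of stratified-algebraic $\CB$-vector bundles lie in $\HCstreven(X;\Z)$. The paper cites the same classical K-theoretic input for the existence of $\theta$, so there is nothing to add.
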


\begin{proof}
Let $k$ be a positive integer. It suffices to prove that for every
spherical cohomology class $u$ in $H^{2k}(X;\Z)$, the cohomology class
$2(k-1)!u$ is in $\HCstr^{2k}(X;\Z)$. To this end, let $h \colon X \to
\SB^{2k}$ be a continuous map with $h^*(s_{2k})=u$ and let $\varphi
\colon \SB^{2k} \to \SB^{2k}$ be a continuous map of degree $2$. Then
\begin{equation*}
(\varphi\circ h)^* (s_{2k}) = h^*( \varphi^*(s_{2k}) ) = h^*(2s_{2k})
=2u.
\end{equation*}
Now we choose a topological $\CB$-vector bundle $\theta$ on $\SB^{2k}$
with
\begin{equation*}
c_k(\theta) = (k-1)!s_{2k},
\end{equation*}
cf. \cite[p.~19]{bib2} or \cite[p.~155]{bib18}. Then
\begin{equation*}
c_k ( (\varphi \circ h)^*\theta ) = (\varphi \circ \theta)^* (c_k
(\theta) ) = (\varphi \circ h)^* ( (k-1)! s_{2k} ) = 2(k-1)!u.
\end{equation*}
According to Lemma~\ref{lem-2-11}, the $\CB$-vector bundle $( \varphi
\circ h)^* \theta$ on $X$ admits a stratified-algebraic structure, and
hence the cohomology class $2(k-1)!u$ is in $\HCstr^{2k}(X;\Z)$.~The
proof is\linebreak complete.
\end{proof}

The following result will be used in the proof of Theorem~\ref{th-1-6}
and is also of independent interest.

\begin{theorem}\label{th-2-13}
Let $X$ be a compact real algebraic variety. Let $k$ be a positive
integer and let $\theta$ be a topological $\F$-vector bundle on
$\SB^{2k}$, where $\F=\CB$ or $\F=\HB$. For any continuous map $h \colon
X \to \SB^{2k}$, the $\F$-vector bundle $h^*\theta \oplus h^*\theta$ on
$X$ admits a stratified-algebraic structure.
\end{theorem}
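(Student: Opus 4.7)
The plan is to reduce to Lemma~\ref{lem-2-11} via a $K$-theoretic comparison on $\SB^{2k}$. Fix a continuous map $\varphi \colon \SB^{2k} \to \SB^{2k}$ of degree $2$ and set $n = \rank \theta$. By Lemma~\ref{lem-2-11}, the $\F$-vector bundle $(\varphi \circ h)^* \theta$ on $X$ admits a stratified-algebraic structure; the aim is to relate this to $h^* \theta \oplus h^* \theta$.

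The key identity is established on $\SB^{2k}$. Because $\SB^{2k}$ is a co-$H$-space, a self-map of $\SB^{2k}$ of degree $d$ induces multiplication by $d$ on the reduced group $\tilde K_{\F}^{0}(\SB^{2k})$. Writing $[\theta] = n + c$ in $K_{\F}^{0}(\SB^{2k})$ with $c$ in $\tilde K_{\F}^{0}(\SB^{2k})$, one therefore obtains
\begin{equation*}
[\theta \oplus \theta] = 2n + 2c = [\varphi^* \theta] + n = [\varphi^* \theta \oplus \varepsilon_{\SB^{2k}}^{n}(\F)].
\end{equation*}
Since $\SB^{2k}$ is a finite $CW$-complex, $\F$-vector bundles on it representing the same class in $K_{\F}^{0}$ become isomorphic after addition of a trivial bundle of sufficiently large rank, so there exists a positive integer $N$ with
\begin{equation*}
\theta \oplus \theta \oplus \varepsilon_{\SB^{2k}}^{N}(\F) \cong \varphi^* \theta \oplus \varepsilon_{\SB^{2k}}^{n+N}(\F)
\end{equation*}
on $\SB^{2k}$.

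Pulling this isomorphism back along $h$ yields
\begin{equation*}
h^* \theta \oplus h^* \theta \oplus \varepsilon_{X}^{N}(\F) \cong (\varphi \circ h)^* \theta \oplus \varepsilon_{X}^{n+N}(\F)
\end{equation*}
on $X$. The right-hand side admits a stratified-algebraic structure, since $(\varphi \circ h)^* \theta$ does and trivial bundles are algebraic. By Theorem~\ref{th-1-1}, the class $\llbracket h^* \theta \oplus h^* \theta \rrbracket$, being the difference of two classes in $\KFstr(X)$, itself lies in $\KFstr(X)$, whence $h^* \theta \oplus h^* \theta$ admits a stratified-algebraic structure. The only technical ingredient beyond Lemma~\ref{lem-2-11} and Theorem~\ref{th-1-1} is the assertion that a degree-$2$ self-map of $\SB^{2k}$ acts by multiplication by $2$ on reduced $K$-theory, which is a standard consequence of the co-$H$-space structure of spheres; this is the only step requiring external input, and it holds uniformly whether $\tilde K_{\F}^{0}(\SB^{2k})$ is free, zero, or $2$-torsion.
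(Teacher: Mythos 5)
Your proof is correct, and its skeleton coincides with the paper's: both fix a degree-$2$ self-map $\varphi$ of $\SB^{2k}$, compare $\theta\oplus\theta$ with $\varphi^*\theta$ stably on the sphere, pull the resulting stable isomorphism back along $h$, and finish with Lemma~\ref{lem-2-11} and Theorem~\ref{th-1-1}. Where you differ is in how the stable comparison on $\SB^{2k}$ is justified, and the difference is substantive. The paper passes to the underlying $\CB$-vector bundle $\theta_{\CB}$, computes $c_k(\varphi^*\theta_{\CB})=2c_k(\theta_{\CB})=c_k(\theta_{\CB}\oplus\theta_{\CB})$, and invokes the fact that $\CB$-vector bundles on $\SB^{2k}$ are stably classified by $c_k$; this forces it to work with $(h^*\theta\oplus h^*\theta)_{\CB}$ throughout and to return to $\F=\HB$ only at the very end via Theorem~\ref{th-2-2}, which is the deep input from \cite{bib30}. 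You instead argue directly in $K_{\F}$ of the sphere, using that a degree-$2$ self-map acts as multiplication by $2$ on reduced $K$-theory because $\SB^{2k}$ is a co-$H$-space; this is valid for quaternionic $K$-theory as well, since it is a reduced cohomology theory, and, as you note, it is insensitive to whether the reduced group is free or has $2$-torsion (as happens for $\SB^{2k}$ with $2k\equiv 6 \pmod 8$ in the quaternionic case, where the Chern-class route would be useless without first complexifying). Your argument therefore avoids both the Chern-class computation and the appeal to Theorem~\ref{th-2-2}, at the cost of importing the co-$H$-space fact, which is external to the paper but standard. One small presentational point: the membership of $\llbracket h^*\theta\oplus h^*\theta\rrbracket$ in $\KFstr(X)$ follows from the subgroup property alone; Theorem~\ref{th-1-1} is what then converts that membership into an actual stratified-algebraic structure, and it is only at this step that the compactness of $X$ is used.
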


\begin{proof}
Let $\varphi \colon \SB^{2k} \to \SB^{2k}$ be a continuous map of degree
$2$. Then
\begin{equation*}
c_k (\varphi^*\theta_{\CB}) = \varphi^*( c_k(\theta_{\CB}) ) =
2c_k(\theta_{\CB}) = c_k ( \theta_{\CB} \oplus \theta_{\CB} ),
\end{equation*}
and hence the $\CB$-vector bundles $\varphi^*\theta_{\CB}$ and
$\theta_{\CB} \oplus \theta_{\CB}$ on $\SB^{2k}$ are stably equivalent,
cf. \cite[p.~19]{bib2} or \cite[p.~155]{bib18}. Consequently, the
$\CB$-vector bundles
\begin{equation*}
h^*(\varphi^*\theta_{\CB}) = (\varphi \circ h)^* \theta_{\CB} \
\textrm{and} \ h^*(\theta_{\CB} \oplus \theta_{\CB}) = (h^* \theta
\oplus h^*\theta)_{\CB}
\end{equation*}
on $X$ are stably equivalent as well. By Lemma~\ref{lem-2-11}, the
$\CB$-vector bundle $(\varphi \circ h)^* \theta_{\CB}$ admits a
stratified-algebraic structure. Hence, according to
Theorem~\ref{th-1-1}, the $\CB$-vector bundle $(h^*\theta \oplus
h^*\theta)_{\CB}$ admits a stratified-algebraic structure. Now the proof
is complete in view of Theorem~\ref{th-2-2}.
\end{proof}

The next three theorems are crucial for the proof of
Theorem~\ref{th-1-3}. We first consider $\HB$-vector bundles. Note that
for any $\HB$-vector bundle $\xi$, we have
\begin{equation*}
c_l(\xi_{\CB})=0
\end{equation*}
for every odd positive integer $l$.

\begin{theorem}\label{th-2-14}
Let $X$ be a compact real algebraic variety. For a topological
$\HB$-vector bundle $\xi$ on $X$, the following conditions are
equivalent:
\begin{conditions}
\item\label{th-2-14-a} There exists a positive integer $r$ such that the
$\HB$-vector bundle $\xi(r)$ admits a stratified-algebraic structure.

\item\label{th-2-14-b} The $\HB$-vector bundle $\xi$ has property \rk
and there exists a positive integer $a$ such that the cohomology class
$ac_{2k}(\xi_{\CB})$ is in $\HCstr^{4k}(X;\Z)$ for every positive integer
$k$ satisfying $8k-2<\dim X$.
\end{conditions}
\end{theorem}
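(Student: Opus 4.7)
The plan is to reduce both implications to the $\CB$-version, Proposition~\ref{prop-2-6}, applied to the complexification $\xi_{\CB}$, and to bridge between $\HB$- and $\CB$-vector bundles via Theorem~\ref{th-2-2}. Throughout, I would exploit the vanishing $c_j(\xi_{\CB})=0$ for every odd positive integer $j$, which was recorded just before the statement of Theorem~\ref{th-2-14}.

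For the implication (\ref{th-2-14-a}) $\Rightarrow$ (\ref{th-2-14-b}), first observe that if $\xi(r)$ admits a stratified-algebraic structure, then $\xi(r)$ has property \rk; since $\rank\xi(r) = r\rank\xi$, the preimage of any integer $d$ under $\rank\xi$ equals the preimage of $rd$ under $\rank\xi(r)$, giving property \rk\ for $\xi$. Since stratified-algebraicity descends under forgetting scalars, the $\CB$-vector bundle $(\xi(r))_{\CB} \cong \xi_{\CB}(r)$ also admits a stratified-algebraic structure. Proposition~\ref{prop-2-6} applied to $\xi_{\CB}$ then yields, for every positive integer $j$, a positive integer $b_j$ with $b_j c_j(\xi_{\CB}) \in \HCstr^{2j}(X;\Z)$. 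Since $H^{4k}(X;\Z) = 0$ whenever $4k > \dim X$, only finitely many $k$ with $8k - 2 < \dim X$ contribute nontrivially, and taking $a$ to be the product of the corresponding $b_{2k}$ produces the single integer demanded by (\ref{th-2-14-b}).

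For the converse (\ref{th-2-14-b}) $\Rightarrow$ (\ref{th-2-14-a}), Theorem~\ref{th-2-2} reduces the task to finding a positive integer $r$ for which $\xi_{\CB}(r)$ admits a stratified-algebraic structure, and Proposition~\ref{prop-2-6} further reduces this to verifying property \rk\ for $\xi_{\CB}$ (immediate, since $\rank\xi_{\CB} = 2\rank\xi$) together with the condition that, for every positive integer $j$, some positive multiple of $c_j(\xi_{\CB})$ lies in $\HCstr^{2j}(X;\Z)$. For odd $j$ this is trivial because $c_j(\xi_{\CB}) = 0$. For $j = 2k$ with $8k - 2 < \dim X$ the hypothesis of (\ref{th-2-14-b}) applies verbatim. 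For the remaining range, $j = 2k$ with $8k - 2 \geq \dim X$, Proposition~\ref{prop-2-7} taken with $d = 4k$ provides a positive integer $a'$ with $a' H^{4k}(X;\Z) \subseteq \Hsph^{4k}(X;\Z)$, and Lemma~\ref{lem-2-12} then yields $2(2k-1)!\,a' \cdot c_{2k}(\xi_{\CB}) \in \HCstr^{4k}(X;\Z)$, as required (for $4k > \dim X$ the class itself vanishes).

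The main obstacle is covering the intermediate range of $k$ where hypothesis (\ref{th-2-14-b}) offers no direct information; the interplay between Serre's divisibility result (Proposition~\ref{prop-2-7}) and Lemma~\ref{lem-2-12} is precisely what bridges that gap. Once all three regimes of $j$ are combined, Proposition~\ref{prop-2-6} delivers the desired $r$ on the $\CB$ side, and a final appeal to Theorem~\ref{th-2-2} transports the stratified-algebraic structure back to the $\HB$-vector bundle $\xi(r)$.
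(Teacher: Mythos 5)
Your proposal is correct and follows essentially the same route as the paper: both directions reduce to Proposition~\ref{prop-2-6} applied to $\xi_{\CB}$, with the gap for $8k-2\geq\dim X$ closed by combining Proposition~\ref{prop-2-7} and Lemma~\ref{lem-2-12}, and the transfer back to $\HB$-bundles handled by Theorem~\ref{th-2-2}. You merely spell out a few details (property \rk{} for $\xi_{\CB}$, the finitely many relevant $k$) that the paper leaves implicit.
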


\begin{proof}
If condition (\ref{th-2-14-a}) is satisfied, then the $\CB$-vector
bundle $\xi_{\CB}(r)$ admits a stratified-algebraic structure, being
isomorphic to $(\xi(r))_{\CB}$. Thus condition (\ref{th-2-14-b}) holds
in view of Proposition~\ref{prop-2-6}.

Now assume that condition (\ref{th-2-14-b}) is satisfied. By
Proposition~\ref{prop-2-7} and Lemma~\ref{lem-2-12}, there exists a
positive integer $b$ such that the cohomology class $bc_{2k}(\xi_{\CB})$
is in $\HCstr^{4k}(X;\Z)$ for every positive integer $k$. Furthermore,
$c_l(\xi_{\CB})=0$ for every odd positive integer $l$. Hence, according
to Proposition~\ref{prop-2-6}, there exists a positive integer $r$ such
that the $\CB$-vector bundle $\xi_{\CB}(r)$ admits a
stratified-algebraic structure. Since the $\CB$-vector bundles
$\xi_{\CB}(r)$ and $(\xi(r))_{\CB}$ are isomorphic, by
Theorem~\ref{th-2-2}, the $\HB$-vector bundle $\xi(r)$ admits a
stratified-algebraic structure. Thus (\ref{th-2-14-b}) implies
(\ref{th-2-14-a}). The proof is complete.
\end{proof}

Recall that for any topological $\CB$-vector bundle $\xi$, the equality
\begin{equation*}
c_k(\bar{\xi}) = (-1)^k c_k(\xi)
\end{equation*}
holds for every nonnegative integer $k$, cf. \cite[p.~168]{bib31}.

\begin{theorem}\label{th-2-15}
Let $X$ be a compact real algebraic variety. For a topological
$\CB$-vector bundle $\xi$ on $X$, the following conditions are
equivalent:
\begin{conditions}
\item\label{th-2-15-a} There exists a positive integer $r$ such that
the $\CB$-vector bundle $\xi(r)$ admits a stratified-algebraic
structure.

\item\label{th-2-15-b} The $\CB$-vector bundle $\xi$ has property \rk
and there exists a positive integer $a$ such that the cohomology class
$ac_{2k}(\xi \oplus \bar{\xi})$ is in $\HCstr^{4k}(X;\Z)$ for every
positive integer $k$ satisfying $8k-2<\dim X$.
\end{conditions}
\end{theorem}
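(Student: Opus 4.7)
The plan is to model the argument on the proof of Theorem~\ref{th-2-14}, with the $\CB$-vector bundle $\xi \oplus \bar{\xi}$ playing the role that $\xi_{\CB}$ did there. Two structural facts will drive the argument. First, $\xi \oplus \bar{\xi} \cong \CB \otimes \xi_{\R}$ is the complexification of the underlying real bundle, so $c_l(\xi \oplus \bar{\xi}) = 0$ for every odd positive integer $l$. Second, $\overline{\xi(r)}_{\R} \cong \xi(r)_{\R}$, so Theorem~\ref{th-2-2} shows that $\xi(r)$ admits a stratified-algebraic structure if and only if $(\xi \oplus \bar{\xi})(r) \cong \xi(r) \oplus \overline{\xi(r)}$ does. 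This lets us move freely between $\xi$ and $\xi \oplus \bar{\xi}$ at the cost of a factor of~$2$ in the multiplicity.

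For (\ref{th-2-15-a})~$\Rightarrow$~(\ref{th-2-15-b}), assume $\xi(r)$ admits a stratified-algebraic structure; then so does $(\xi \oplus \bar{\xi})(r)$ by the preceding remark. Applying Proposition~\ref{prop-2-6} to $\xi \oplus \bar{\xi}$ yields that $\xi \oplus \bar{\xi}$ has property \rk (hence so does $\xi$, since $\rank(\xi \oplus \bar{\xi}) = 2 \rank \xi$) and supplies integers $b_j$ with $b_j c_j(\xi \oplus \bar{\xi}) \in \HCstr^{2j}(X; \Z)$ for every $j \geq 1$. Taking $a$ to be a common multiple of the $b_{2k}$ for the (finitely many) $k$ with $8k - 2 < \dim X$ yields~(\ref{th-2-15-b}).

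For (\ref{th-2-15-b})~$\Rightarrow$~(\ref{th-2-15-a}), I would first upgrade~(\ref{th-2-15-b}) to a uniform statement: a single positive integer $b$ with $b c_{2k}(\xi \oplus \bar{\xi}) \in \HCstr^{4k}(X; \Z)$ for \emph{every} $k \geq 1$. For $k$ with $8k - 2 < \dim X$ this is the hypothesis. For $k$ with $8k - 2 \geq \dim X$ and $4k \leq \dim X$, Proposition~\ref{prop-2-7} provides an $a_0$ with $a_0 H^{4k}(X; \Z) \subseteq \Hsph^{4k}(X; \Z)$, and Lemma~\ref{lem-2-12} then gives $2(k-1)! a_0 c_{2k}(\xi \oplus \bar{\xi}) \in \HCstr^{4k}(X; \Z)$; for $k$ with $4k > \dim X$ the cohomology group vanishes and nothing is needed. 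Only finitely many $k$ are relevant, so $b$ may be taken as a common multiple. Combining this with the vanishing of the odd Chern classes of $\xi \oplus \bar{\xi}$ and with property \rk (which transfers from $\xi$ to $\xi \oplus \bar{\xi}$), Proposition~\ref{prop-2-6} produces an $r$ such that $(\xi \oplus \bar{\xi})(r)$ admits a stratified-algebraic structure. Passing to underlying real bundles, $((\xi \oplus \bar{\xi})(r))_{\R} \cong (\xi(2r))_{\R}$, and Theorem~\ref{th-2-2} then yields~(\ref{th-2-15-a}) with multiplicity $2r$.

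The main obstacle is the bookkeeping in the second implication in the range $8k - 2 \geq \dim X$, where hypothesis~(\ref{th-2-15-b}) is silent: one must stitch together Serre's spherical bound (Proposition~\ref{prop-2-7}) and the spherical-to-stratified-$\CB$-algebraic transfer (Lemma~\ref{lem-2-12}) in that range so that Proposition~\ref{prop-2-6} can be invoked uniformly in $k$. Once this synthesis is in place, both implications reduce to applications of results already proved in the paper.
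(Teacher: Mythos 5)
Your proposal is correct and is essentially the paper's argument: the paper proves this theorem by reducing to Theorem~\ref{th-2-14} applied to $\HB \otimes \xi$, using Lemma~\ref{lem-2-3} and the isomorphism $(\HB \otimes \xi)_{\CB} \cong \xi \oplus \bar{\xi}$, which is exactly the computation you have inlined (and your Serre--plus--Lemma~\ref{lem-2-12} bookkeeping in the range $8k-2 \geq \dim X$ is precisely what the proof of Theorem~\ref{th-2-14} does). One small quibble: being the complexification of a real bundle only forces $2c_l = 0$ for odd $l$, not $c_l = 0$; the exact vanishing of $c_l(\xi \oplus \bar{\xi})$ follows instead from the quaternionic structure (or the Whitney formula with $c_k(\bar{\xi}) = (-1)^k c_k(\xi)$), though even the $2$-torsion statement would suffice for invoking Proposition~\ref{prop-2-6}.
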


\begin{proof}
Since $(\HB \otimes \xi)_{\CB} \cong \xi \oplus \bar{\xi}$, the equality
\begin{equation*}
c_l( (\HB \otimes \xi)_{\CB} ) = c_l(\xi \oplus \bar{\xi})
\end{equation*}
holds for every nonnegative integer $l$. Furthermore, the $\CB$-vector
bundle $\xi$ has property \rk if and only if the $\HB$-vector bundle
$\HB \otimes \xi$ has it. Hence the proof is complete in view of
Lemma~\ref{lem-2-3} and Theorem~\ref{th-2-14}.
\end{proof}

Let $\xi$ be an $\R$-vector bundle. Recall that for any nonnegative
integer $k$, the $k$th Pontryagin class of $\xi$ is defined by
\begin{equation*}
p_k(\xi) = (-1)^k c_{2k}(\CB \otimes \xi).
\end{equation*}

\begin{theorem}\label{th-2-16}
Let $X$ be a compact real algebraic variety. For a topological
$\R$-vector bundle $\xi$ on $X$, the following conditions are
equivalent:
\begin{conditions}
\item\label{th-2-16-a} There exists a positive integer $r$ such that the
$\R$-vector bundle $\xi(r)$ admits a stratified-algebraic structure.

\item\label{th-2-16-b} The $\R$-vector bundle $\xi$ has property \rk
and there exists a positive integer $a$ such that the cohomology class
$ap_k(\xi)$ is in $\HCstr^{4k}(X;\Z)$ for every positive integer $k$
satisfying $8k-2<\dim X$.
\end{conditions}
\end{theorem}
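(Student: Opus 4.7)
The plan is to reduce Theorem~\ref{th-2-16} to Theorem~\ref{th-2-15} by passing to the complexification $\CB \otimes \xi$. The essential bridge is Lemma~\ref{lem-2-4}, which states that $(\CB \otimes \xi)(q)$ admits a stratified-algebraic structure if and only if $\xi(2q)$ does; moreover, since the $\CB$-rank of $\CB \otimes \xi$ at each point coincides with the $\R$-rank of $\xi$, property~\rk holds for $\xi$ if and only if it holds for $\CB \otimes \xi$.

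For the implication (\ref{th-2-16-a})$\Rightarrow$(\ref{th-2-16-b}), suppose $\xi(r)$ admits a stratified-algebraic structure. Then so does $\xi(2r)$, and by Lemma~\ref{lem-2-4} the $\CB$-vector bundle $(\CB \otimes \xi)(r)$ does as well. Proposition~\ref{prop-2-6} applied to $\CB \otimes \xi$ then furnishes, for each positive integer $j$, a positive integer $b_j$ with $b_j c_j(\CB \otimes \xi) \in \HCstr^{2j}(X;\Z)$. Taking $j=2k$ and recalling that $p_k(\xi) = (-1)^k c_{2k}(\CB \otimes \xi)$, this gives $b_{2k}\, p_k(\xi) \in \HCstr^{4k}(X;\Z)$. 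Only finitely many $k$ satisfy $8k-2 < \dim X$, so the product of the corresponding $b_{2k}$'s is a single positive integer $a$ witnessing condition~(\ref{th-2-16-b}).

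For the converse, I would verify the hypothesis of Theorem~\ref{th-2-15} for the $\CB$-vector bundle $\eta \coloneqq \CB \otimes \xi$. Since $\eta$ is the complexification of a real bundle, $\bar\eta \cong \eta$, so the condition to check becomes that some positive-integer multiple of $c_{2k}(\eta \oplus \eta)$ lies in $\HCstr^{4k}(X;\Z)$ for every positive $k$ with $8k-2 < \dim X$. The Whitney product formula $c(\eta \oplus \eta) = c(\eta) \cupproduct c(\eta)$ together with the vanishing of the odd Chern classes of a complexification yields an identity of the form
\begin{equation*}
c_{2k}(\eta \oplus \eta) = 2 c_{2k}(\eta) + P_k\bigl(c_2(\eta), c_4(\eta), \ldots, c_{2k-2}(\eta)\bigr)
\end{equation*}
for an explicit integer polynomial $P_k$. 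By hypothesis, and since $c_{2j}(\eta) = (-1)^j p_j(\xi)$, some positive-integer multiple of each $c_{2j}(\eta)$ with $j \leq k$ lies in $\HCstr^{4j}(X;\Z)$; because $\HCstreven(X;\Z)$ is a subring of $\Heven(X;\Z)$, a sufficiently large common multiplier $a'$ then sends every $c_{2k}(\eta \oplus \eta)$ into $\HCstr^{4k}(X;\Z)$ throughout the relevant finite range of $k$. Theorem~\ref{th-2-15} now supplies a positive integer $r$ such that $\eta(r) = (\CB \otimes \xi)(r)$ admits a stratified-algebraic structure, and a final appeal to Lemma~\ref{lem-2-4} delivers the same for $\xi(2r)$. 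The only non-formal step is the polynomial identity for $c_{2k}(\eta \oplus \eta)$, which is routine from the vanishing of the odd Chern classes of $\eta$, so no serious obstacle is anticipated.
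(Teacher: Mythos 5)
Your overall route is sound but genuinely different from the paper's: you reduce Theorem~\ref{th-2-16} to Theorem~\ref{th-2-15} applied to $\eta=\CB\otimes\xi$ (using $\bar\eta\cong\eta$ and Lemma~\ref{lem-2-4}), whereas the paper argues directly from Proposition~\ref{prop-2-6} applied to $\CB\otimes\xi$, first upgrading the hypothesis from the finite range $8k-2<\dim X$ to all $k$ by means of Serre's theorem (Proposition~\ref{prop-2-7}) together with Lemma~\ref{lem-2-12}, and disposing of the odd-index Chern classes via $2c_l(\CB\otimes\xi)=0$. Your reduction has the merit that the range restriction and Serre's theorem are already absorbed into Theorem~\ref{th-2-15}, so you never need to invoke Proposition~\ref{prop-2-7} or Lemma~\ref{lem-2-12} again; the forward implication is essentially the paper's.

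However, the one step you flag as ``non-formal'' is stated incorrectly. The odd Chern classes of a complexification do \emph{not} vanish; they are only $2$-torsion ($2c_l(\CB\otimes\xi)=0$ for odd $l$, as the paper recalls). Since $i+j=2k$ forces $i$ and $j$ to have the same parity, the Whitney formula gives
\begin{equation*}
c_{2k}(\eta\oplus\eta)=2c_{2k}(\eta)+\sum_{\substack{a+b=k\\ a,b\geq 1}}c_{2a}(\eta)\cupproduct c_{2b}(\eta)+\sum_{\substack{i+j=2k\\ i,j\ \mathrm{odd}}}c_i(\eta)\cupproduct c_j(\eta),
\end{equation*}
and the last sum need not vanish (for $k=1$ it is $c_1(\eta)^2$, which is nonzero for the complexified tautological line bundle on $\R\mathbb{P}^4$). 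So your asserted identity $c_{2k}(\eta\oplus\eta)=2c_{2k}(\eta)+P_k(c_2(\eta),\ldots,c_{2k-2}(\eta))$ is false as written. The gap is repairable in one line: the offending sum is annihilated by $2$, so $2c_{2k}(\eta\oplus\eta)$ \emph{is} an integer polynomial in $c_2(\eta),\ldots,c_{2k}(\eta)$, i.e.\ in $p_1(\xi),\ldots,p_k(\xi)$; since $8a-2<8k-2<\dim X$ for $1\leq a\leq k-1$, the hypothesis together with the subring property of $\HCstreven(X;\Z)$ then puts $2a^2c_{2k}(\eta\oplus\eta)$ in $\HCstr^{4k}(X;\Z)$ for every $k$ in the relevant range, and Theorem~\ref{th-2-15} applies. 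With that correction (take your common multiplier $a'$ even), the argument goes through.
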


\begin{proof}
Assume that condition (\ref{th-2-16-a}) is satisfied. Then the
$\R$-vector bundle $\xi(r)$ has property \rk and hence $\xi$ has it as
well. Furthermore, the $\CB$-vector bundle ($\CB \otimes \xi)(r)$ admits
a stratified-algebraic structure, being isomorphic to $\CB \otimes
\xi(r)$. According to Proposition~\ref{prop-2-6}, for every positive
integer $j$, there exists a positive integer $b_j$ such that the
cohomology class $b_jc_j(\CB \otimes \xi)$ is in $\HCstr^{2j}(X;\Z)$. In
particular, (\ref{th-2-16-a}) implies (\ref{th-2-16-b}) in view of the
definition of $p_k(\xi)$.

Now assume that condition (\ref{th-2-16-b}) is satisfied. By
Proposition~\ref{prop-2-7} and Lemma~\ref{lem-2-12}, there exists a
positive integer $b$ such that the cohomology class $bc_{2k}(\CB \otimes
\xi)$ is in $\HCstr^{4k}(X;\Z)$ for every positive integer $k$. Recall
that $2c_l(\CB \otimes \xi) =0$ for every odd positive integer $l$, cf.
\cite[p.~174]{bib31}. Hence, according to Proposition~\ref{prop-2-6},
the $\CB$-vector bundle $(\CB \otimes \xi)(q)$ admits a
stratified-algebraic structure for some positive integer $q$. In view of
Lemma~\ref{lem-2-4}, the $\R$-vector bundle $\xi(2q)$ admits a
stratified-algebraic structure. Thus (\ref{th-2-16-b}) implies
(\ref{th-2-16-a}). The proof is complete.
\end{proof}

We need one more technical result.

\begin{lemma}\label{lem-2-17}
Let $X$ be a compact real algebraic variety. If the group
$\Gamma_{\CB}(X)$ is finite, then the quotient group $H^{2j}(X;\Z) /
\HCstr^{2j}(X;\Z)$ is finite for every positive integer $j$. If the
group $\GammaF(X)$ is finite, where $\F=\R$ or $\F=\HB$, then the
quotient group $H^{4k}(X;\Z) / \HCstr^{4k}(X;\Z)$ is finite for every
positive integer $k$.
\end{lemma}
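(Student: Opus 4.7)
The plan is to realise cohomology classes via Chern characters of virtual complex vector bundles, multiply by the integer $r$ killing $\GammaF(X)$, and exploit that Chern characters of stratified-algebraic bundles lie in $\HCstreven(X;\Z) \otimes_{\Z} \Q$.

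I first treat $\F = \CB$. Pick $r > 0$ with $r \GammaC(X) = 0$; by Proposition~\ref{prop-1-2}, every constant-rank $\CB$-vector bundle $\xi$ satisfies $r \llbracket \xi \rrbracket \in \KCstr(X)$. Fix $u \in H^{2j}(X;\Z)$. The Atiyah--Hirzebruch theorem provides a rational isomorphism $\ch \colon \KF(X) \otimes_{\Z} \Q \xrightarrow{\sim} \Heven(X;\Q)$ (taking $\F = \CB$), so there exist $\alpha \in \KF(X)$ and $N > 0$ with $\ch(\alpha) = Nu$ concentrated in degree $2j$. In particular $\rank \alpha = 0$, so after adding compatible trivial summands on Euclidean components, one can write $\alpha = \llbracket \xi_1 \rrbracket - \llbracket \xi_2 \rrbracket$ with $\xi_1, \xi_2$ of a common constant rank. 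Then $r \alpha = \llbracket \xi_1(r) \rrbracket - \llbracket \xi_2(r) \rrbracket \in \KCstr(X)$.

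The key observation is that $\ch$ carries $\KCstr(X)$ into $\HCstreven(X;\Z) \otimes_{\Z} \Q$: Chern classes of stratified-algebraic $\CB$-vector bundles lie in $\HCstr^{2k}(X;\Z)$ (as noted in the excerpt just before Lemma~\ref{lem-2-5}), $\HCstreven(X;\Z)$ is a subring of $\Heven(X;\Z)$, and $\ch$ is a rational polynomial in Chern classes. Hence $rNu = \ch(r \alpha) \in \HCstr^{2j}(X;\Z) \otimes_{\Z} \Q$. Since $H^{2j}(X;\Z)$ is finitely generated, this inclusion forces some positive integer multiple of $rNu$ -- and therefore of $u$ -- to lie in $\HCstr^{2j}(X;\Z)$. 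As $u$ was arbitrary and $H^{2j}(X;\Z)$ is finitely generated, the quotient $H^{2j}(X;\Z) / \HCstr^{2j}(X;\Z)$ is finite.

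For $\F = \R$ and $\F = \HB$, I reduce to the $\CB$ case. Given $u \in H^{4k}(X;\Z)$, choose $\beta \in K_{\CB}(X)$ with $\ch(\beta) = Nu$ concentrated in degree $4k$. Because the component of $\ch(\bar{\beta})$ in degree $2l$ equals $(-1)^l$ times that of $\ch(\beta)$ and $2k$ is even, $\ch(\beta + \bar{\beta}) = 2Nu$. The identities $\CB \otimes \beta_{\R} \cong \beta + \bar{\beta}$ and $(\HB \otimes \beta)_{\CB} \cong \beta + \bar{\beta}$ (compare the discussion preceding Lemmas~\ref{lem-2-3} and~\ref{lem-2-4}) furnish the bridge: set $\alpha = \beta_{\R} \in K_{\R}(X)$ in the real case and $\gamma = \HB \otimes \beta \in K_{\HB}(X)$ in the quaternionic case; write each as a difference of constant-rank bundles, and use the hypothesis $r \GammaF(X) = 0$ to place $r \alpha$, respectively $r \gamma$, in the corresponding stratified-algebraic $K$-subgroup. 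Extension (resp.\ restriction) of scalars then gives $r (\beta + \bar{\beta}) \in \KCstr(X)$, and the argument of the previous paragraph yields $2 r N u \in \HCstr^{4k}(X;\Z) \otimes_{\Z} \Q$. A positive multiple of $u$ therefore lies in $\HCstr^{4k}(X;\Z)$, establishing finiteness of $H^{4k}(X;\Z) / \HCstr^{4k}(X;\Z)$.

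The main technical point is that $\ch$ sends $\KCstr(X)$ into $\HCstreven(X;\Z) \otimes_{\Z} \Q$; but this is a direct consequence of Chern classes of stratified-algebraic $\CB$-vector bundles lying in $\HCstr^{2k}(X;\Z)$, combined with the subring structure of $\HCstreven(X;\Z)$, and was already used implicitly in the proof of Proposition~\ref{prop-2-6}. The restriction to degree $4k$ in the $\R, \HB$ cases is inherent: $\beta + \bar{\beta}$ has trivial component in degrees congruent to $2 \bmod 4$, so this route cannot reach classes in $H^{4k+2}(X;\Z)$.
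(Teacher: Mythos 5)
Your argument is correct and follows the same overall strategy as the paper's proof of Lemma~\ref{lem-2-17}: realize a multiple of the given class $u$ through complex K-theory, use Proposition~\ref{prop-1-2} to place the $r$-fold multiple in $\KCstr(X)$, and reduce the cases $\F=\R$ and $\F=\HB$ to $\F=\CB$ via $\xi\oplus\bar{\xi}$ --- which is exactly why both arguments are confined to degrees $4k$ in those cases. The difference lies in the realization step and in which characteristic class carries the information. The paper invokes the integral statement (from the Atiyah--Hirzebruch references) that some multiple $bu$ equals $c_j(\xi)$ for a constant-rank $\CB$-vector bundle $\xi$ with $c_i(\xi)=0$ for $1\leq i\leq j-1$; the vanishing of the lower classes then gives $c_j(\xi(r))=rbu$ and $c_{2k}(\xi\oplus\bar{\xi})=2bu$ directly from the Whitney formula, so the conclusion is read off from a single Chern class of an honest bundle, with an explicit annihilating integer $2qb$. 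You instead use the rational Chern character isomorphism together with the (elementary direction of the) fact that $\ch$ carries $\KCstr(X)$ into $\HCstreven(X;\Z)\otimes_{\Z}\Q$; this avoids the lower-Chern-class bookkeeping but works with virtual classes, so you must pass to constant-rank representatives --- your padding by trivial summands of component-dependent rank is legitimate, since only $\xi_1$ and $\xi_2$, not the padding bundle, need constant rank for Proposition~\ref{prop-1-2} to apply --- and you lose track of the explicit multiple of $u$ landing in $\HCstr^{2j}(X;\Z)$ (torsion classes of $H^{2j}(X;\Z)$, invisible to $\ch$, are absorbed by a further integer multiple, which your finite-generation remark correctly covers). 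Both realization statements are consequences of the Atiyah--Hirzebruch spectral sequence, so the inputs are of the same nature and the two proofs are essentially interchangeable.
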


\begin{proof}
Recall that the cohomology group $H^*(X;\Z)$ is finitely generated, the
variety $X$ being triangulable.

There exists a positive integer $b$ such that for every positive integer
$j$ and every cohomology class $u$ in $H^{2j}(X;\Z)$, one can find a
topological $\CB$-vector bundle $\xi$ on $X$ with
\begin{equation*}
c_i(\xi)=0 \textrm{ for } 1 \leq i \leq j-1 \textrm{ and } c_j(\xi)=bu,
\end{equation*}
cf. \cite[p.~19]{bib2} or \cite[p.~155, Theorem~A]{bib18}. We can choose such a
$\CB$-vector bundle $\xi$ of constant rank.

Assume that the group $\Gamma_{\CB}(X)$ is finite and
$r\Gamma_{\CB}(X)=0$ for some positive integer $r$. Then the
$\CB$-vector bundle $\xi(r)$ admits a stratified-algebraic structure,
and hence the cohomology class
\begin{equation*}
c_j(\xi(r)) = rc_j(\xi) = rbu
\end{equation*}
is in $\HCstr^{2j}(X;\Z)$. Thus the quotient group $H^{2j}(X;\Z) /
\HCstr^{2j}(X;\Z)$ is finite, as asserted.

Note that the complexification $\CB \otimes \xi_{\R}$ of the $\R$-vector
bundle $\xi_{\R}$ satisfies
\begin{equation*}
\CB \otimes \xi_{\R} \cong \xi \oplus \bar{\xi}.
\end{equation*}
Similarly, for the quaternionization $\HB \otimes \xi$ of the
$\CB$-vector bundle $\xi$, we have
\begin{equation*}
(\HB \otimes \xi)_{\CB} \cong \xi \oplus \bar{\xi}.
\end{equation*}

If the group $\GammaR(X)$ is finite and $q\GammaR(X)=0$ for some
positive integer $q$, then the $\R$-vector bundle $\xi_{\R}(q)$ admits a
stratified-algebraic structure, and hence so do the $\CB$-vector bundles
\begin{equation*}
\CB \otimes \xi_{\R} (q) \cong (\CB \otimes \xi_{\R}) (q) \cong (\xi
\oplus \bar{\xi})(q).
\end{equation*}

If the group $\Gamma_{\HB}(X)$ is finite and $q\Gamma_{\HB}(X)=0$, then
the $\HB$-vector bundle $(\HB \otimes \xi)(q)$ admits a
stratified-algebraic structure, and hence so do the $\CB$-vector bundles
\begin{equation*}
( (\HB \otimes \xi) (q) )_{\CB} \cong (\HB \otimes \xi)_{\CB} (q) \cong (\xi
\oplus \bar{\xi})(q).
\end{equation*}

Consequently, if $q\GammaF(X)=0$, where $\F=\R$ or $\F=\HB$, then the
Chern class $c_j( (\xi \oplus \bar{\xi}) (q) )$ is in $\HCstr^{2j}
(X;\Z)$. Now suppose that $j=2k$, where $k$ is a positive integer. Then
\begin{equation*}
c_i(\xi \oplus \bar{\xi}) = 0 \textrm{ for } 1 \leq i \leq 2k-1
\textrm{ and } c_{2k}(\xi \oplus \bar{\xi}) = 2c_{2k}(\xi) = 2bu,
\end{equation*}
which implies the equality
\begin{equation*}
c_{2k} ( (\xi \oplus \bar{\xi}) (q) ) = qc_{2k} (\xi \oplus \bar{\xi})
=2qbu.
\end{equation*}
Thus the cohomology class $2qbu$ is in $\HCstr^{4k}(X;\Z)$. In
conclusion, the quotient group 
\begin{equation*}
H^{4k}(X;\Z) / \HCstr^{4k}(X;\Z)
\end{equation*}
is finite. The proof is complete.
\end{proof}

We are now ready to prove the theorems announced in Section~\ref{sec-1}.

\begin{proof}[Proof of Theorem~\ref{th-1-3}]
In view of Lemma~\ref{lem-2-17}, condition (\ref{th-1-3-a}) implies
(\ref{th-1-3-b}). By combining Theorems~\ref{th-2-14}, \ref{th-2-15}
and~\ref{th-2-16}, we conclude that (\ref{th-1-3-b}) implies
(\ref{th-1-3-a}).
\end{proof}

\begin{proof}[Proof of Theorem~\ref{th-1-4}]
It suffices to make use of Theorem~\ref{th-1-3} and
Lemma~\ref{lem-2-12}.
\end{proof}

\begin{proof}[Proof of Theorem~\ref{th-1-6}]
Let $n=\dim X$. According to Proposition~\ref{prop-1-2}, it suffices to
prove that for any topological $\F$-vector bundle $\xi$ of constant
positive rank on $X$, the $\F$-vector bundle $\xi(r)$ admits a
stratified-algebraic structure, where
\begin{equation*}
r =
\begin{cases}
2^{a(n, \F)} & \textrm{if } n\leq 7\\
2^{a(n,\F)+2} & \textrm{if } n=8.
\end{cases}
\end{equation*}
If $n \leq d(\F)$, then $a(n,\F)=1$ and the $\F$-vector bundle
$\xi(1)=\xi$ admits a stratified-algebraic structure, cf.
\cite[Corollary~3.6]{bib30}. Henceforth we assume that
\begin{equation*}
n \geq d(\F)+1.
\end{equation*}
The rest of the proof is divided into three steps.

\begin{case}\label{case-1}
Suppose that $\F=\HB$.
\end{case}

The $4$-sphere $\SB^4$ can be identified (as a topological space) with
the quaternionic projective line $\PB^1(\HB)$. Let $\theta$ be the
$\HB$-line bundle on $\SB^4$ corresponding to the tautological
$\HB$-line bundle on $\PB^1(\HB)$. Since $5 \leq n \leq 8$, we have
$a(n,\HB)=1$.

First suppose that $5 \leq n \leq 7$. Then $\xi$ can be expressed as
\begin{equation*}
\xi = \lambda \oplus \varepsilon,
\end{equation*}
where $\lambda$ and $\varepsilon$ are topological $\HB$-vector bundles,
$\rank \lambda =1$ and $\varepsilon$ is trivial, cf.
\cite[p.~99]{bib20}. For the same reason, the $\HB$-vector bundle
$\lambda \oplus \lambda$ has a nowhere vanishing continuous section.
Thus the $\HB$-line bundle $\lambda$ is generated by two continuous
sections. It follows that we can find a continuous map $h \colon X \to
\SB^4$ with $\lambda \cong h^*\theta$. According to
Theorem~\ref{th-2-13}, the $\HB$-vector bundle $\lambda \oplus \lambda =
\lambda(2)$ admits a stratified-algebraic structure. Since
\begin{equation*}
\xi(2) \cong \lambda(2) \oplus \varepsilon(2),
\end{equation*}
the $\HB$-vector bundle $\xi(2)$ admits a stratified-algebraic
structure, as required.

Now suppose that $n=8$. It remains to prove that the $\HB$-vector bundle
$\xi(8)$ admits a stratified-algebraic structure. This can be done as
follows. Let $\FC = (X_{-1}, X_0, \ldots, X_m)$ be a filtration of $X$
such that the variety $X_i \setminus X_{i-1}$ is nonsingular of pure
dimension for $0 \leq i \leq m$. According to Hironaka's theorem on
resolution of singularities \cite{bib19, bib22}, for each $i=0, \ldots,
m$, there exists a multiblowup $\pi_i \colon X'_i \to X_i$ over
$X_{i-1}$ with $X'_i$ nonsingular of pure dimension. Consider the
pullback $\HB$-vector bundle $\xi_i \coloneqq \pi_i^*(\xi|_{X_i})$ on
$X'_i$. According to Theorem~\ref{th-2-10}, it suffices to prove that
the $\HB$-vector bundle $\xi_i(8)$ admits a stratified-algebraic
structure. If $\dim X'_i \leq 7$, we already established a stronger
result, namely, $\xi_i(2)$ admits a stratified-algebraic structure. If
$\dim X'_i = 8$, we choose a finite subset $A_i$ of $X'_i$ whose
intersection with each connected component of $X'_i$ consists of one
point. Let $\sigma_i \colon X''_i \to X'_i$ be the blowup of $X'_i$
with center $A_i$. We can replace $\pi_i \colon X'_i \to X_i$ by the
composite map $\sigma_i \circ \pi_i \colon X''_i \to X_i$ and replace the
$\HB$-vector bundle $\xi_i$ on $X'_i$ by the $\HB$-vector bundle
$(\sigma_i
\circ \pi_i)^* (\xi|_{X_i})$ on $X''_i$. Note that $X''_i$ is a compact
nonsingular real algebraic variety of pure dimension $8$, and each
connected component of $X''_i$ is nonorientable as a $\C^{\infty}$
manifold. Thus in order to simplify notation we may assume that the
variety $X$ is nonsingular of pure dimension $8$, and each connected
component of $X$ is nonorientable as a $\C^{\infty}$ manifold. The last
condition implies the equality
\begin{equation*}
2H^8(X;\Z)=0.
\end{equation*}
Since $c_l(\xi_{\CB})=0$ for every odd positive integer $l$, we get
\begin{equation*}
c_4 ( (\xi(4))_{\CB} ) = c_4 ( \xi_{\CB}(4) ) = 4c_4(\xi_{\CB}) +
6c_2(\xi_{\CB}) \cupproduct c_2(\xi_{\CB}) =0
\end{equation*}
in $H^8(X;\Z)$. The $\HB$-vector bundle $\xi(4)$ can be expressed as the
direct sum of a topological $\HB$-vector bundle $\eta$ of rank $2$ and a
trivial $\HB$-vector bundle, cf. \cite[p.~99]{bib20}. Then
\begin{equation*}
c_4(\eta_{\CB}) = c_4( (\xi(4))_{\CB} ) = 0.
\end{equation*}
Recall that $c_4 (\eta_{\CB})$ is the Euler class $e(\eta_{\R})$ of the
oriented $\R$-vector bundle $\eta_{\R} = (\eta_{\CB})_{\R}$, cf.
\cite[p.~159]{bib31}. Interpreting $e(\eta_{\R})$ as an obstruction, we
conclude that the $\HB$-vector bundle $\eta$ has a nowhere vanishing
continuous section, cf. \cite[p.~139, 140, 147]{bib31} and \cite{bib33}.
Consequently, the $\HB$-vector bundle $\xi(4)$ can be expressed as
\begin{equation*}
\xi(4) = \mu \oplus \delta,
\end{equation*}
where $\mu$ and $\delta$ are topological $\HB$-vector bundles, $\rank
\mu = 1$ and $\delta$ is trivial. Since
\begin{equation*}
\xi(8) \cong \mu(2) \oplus \delta(2),
\end{equation*}
it suffices to prove that the $\HB$-vector bundle $\mu(2)$ admits a
stratified-algebraic structure. Note that
\begin{equation*}
c_4( (\mu(2))_{\CB} ) = c_4 ( (\xi(8))_{\CB} ) = 8c_4(\xi_{\CB}) + 28
c_2(\xi_{\CB}) \cupproduct c_2(\xi_{\CB}) = 0
\end{equation*}
in $H^8(X;\Z)$. Now, interpreting $c_4(\mu(2)) = e( (\mu(2))_{\R} )$ as
an obstruction, we get a nowhere vanishing continuous section of
$\mu(2)$. In other words, the $\HB$-line bundle $\mu$ is generated by two
continuous sections. It follows that we can find a continuous map $g
\colon X \to \SB^4$ with $\mu \cong g^*\theta$. According to
Theorem~\ref{th-2-13}, the $\HB$-vector bundle $\mu \oplus \mu = \mu(2)$
admits a stratified-algebraic structure. The proof of Case~\ref{case-1}
is complete.

\begin{case}\label{case-2}
Suppose that $\F=\CB$.
\end{case}

Since $n \geq 3$, we have $a(n, \CB) = a(n, \HB)+1$. Hence it suffices
to apply Case~\ref{case-1} and Lemma~\ref{lem-2-3} to the $\HB$-vector
bundle $\HB \otimes \xi$.

\begin{case}\label{case-3}
Suppose that $\F=\R$.
\end{case}

Since $n \geq 2$, we have $a(n, \R) = a(n, \CB)+1$. Hence it suffices to
apply Case~\ref{case-2} and Lemma~\ref{lem-2-4} to the $\CB$-vector
bundle $\CB \otimes \xi$.

The proof is complete.
\end{proof}

\section{Line bundles}\label{sec-3}

In this short section we concentrate our attention on $\CB$-line
bundles. For any real algebraic variety $X$, let $\VBC^1(X)$ denote the
group of isomorphism classes of topological $\CB$-line bundles on $X$
(with operation induced by tensor product). Let $\VBCstr^1(X)$ be the
subgroup of $\VBC^1(X)$ consisting of the isomorphism classes of all
$\CB$-line bundles admitting a stratified-algebraic structure. Since $X$
has the homotopy type of a compact polyhedron \cite[pp.~217, 225]{bib7},
the group $\VBC^1(X)$ is finitely generated, being isomorphic to the
cohomology group $H^2(X;\Z)$. In particular, the quotient group
\begin{equation*}
\GammaC^1(X) \coloneqq \VBC^1(X) / \VBCstr^1(X)
\end{equation*}
is finitely generated. Thus the group $\GammaC^1(X)$ is finite if and
only if
\begin{equation*}
r\GammaC^1(X) = 0
\end{equation*}
for some positive integer $r$. Furthermore, the latter condition holds if and
only if for every topological $\CB$-line bundle $\lambda$ on $X$ its
$r$th tensor power $\lambda^{\otimes r}$ admits a stratified-algebraic
structure.

\begin{proposition}\label{prop-3-1}
Let $X$ be a real algebraic variety. For any topological $\CB$-line
bundle $\lambda$ on $X$ and positive integer $r$, if $\lambda(r)$ admits
a stratified-algebraic structure, then so does $\lambda^{\otimes r}$.
\end{proposition}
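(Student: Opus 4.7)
The plan is to exploit the fundamental identity $\lambda^{\otimes r} \cong \det(\lambda(r))$, which holds because the top exterior power of a direct sum of $r$ copies of a line bundle $\lambda$ equals $\bigotimes_{i=1}^{r} \lambda = \lambda^{\otimes r}$ (all other wedge summands vanish, being of the wrong rank). Granted this, it suffices to show that the determinant (top exterior power) of a stratified-algebraic $\CB$-vector bundle of constant rank $r$ is itself a stratified-algebraic $\CB$-line bundle.

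First I would unpack the hypothesis: since $\lambda(r)$ admits a stratified-algebraic structure, there exist a positive integer $n$, a topological $\CB$-vector subbundle $\xi$ of $\varepsilon_X^n(\CB)$, and a stratification $\SC$ of $X$ such that $\xi \cong \lambda(r)$ and $\xi|_S$ is an algebraic $\CB$-vector subbundle of $\varepsilon_S^n(\CB)$ for every stratum $S$ of $\SC$. In particular, $\xi$ has constant rank $r$, so $\det \xi = \Lambda^r \xi$ is a topological $\CB$-line bundle on $X$.

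Next I would realize $\det \xi$ as a stratified-algebraic $\CB$-line subbundle of a trivial bundle. The inclusion $\xi \hookrightarrow \varepsilon_X^n(\CB)$ induces an inclusion
\[
\Lambda^r \xi \hookrightarrow \Lambda^r\bigl(\varepsilon_X^n(\CB)\bigr) = \varepsilon_X^N(\CB), \qquad N = \binom{n}{r},
\]
which exhibits $\det \xi$ as a topological $\CB$-line subbundle of a trivial $\CB$-vector bundle on $X$. On each stratum $S$, the algebraicity of $\xi|_S \subseteq \varepsilon_S^n(\CB)$ implies the algebraicity of $\Lambda^r(\xi|_S) = (\det \xi)|_S$ as a subbundle of $\varepsilon_S^N(\CB)$, since forming the top exterior power of an algebraic subbundle of a trivial algebraic bundle is itself an algebraic operation. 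Thus $\det \xi$ is stratified-algebraic with respect to the same stratification $\SC$, and the topological isomorphisms $\lambda^{\otimes r} \cong \det(\lambda(r)) \cong \det \xi$ yield the conclusion.

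The argument uses no compactness of $X$ and no appeal to Theorem~\ref{th-1-1}, which matches the generality of the statement. The only point requiring a moment's care is the algebraicity of top exterior powers on each stratum, and this is a standard, essentially formal fact about algebraic subbundles of trivial algebraic bundles; this is the sole place where one needs to spell out a short verification.
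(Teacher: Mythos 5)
Your proof is correct and takes essentially the same route as the paper: both arguments rest on the identity $\lambda^{\otimes r} \cong \det\bigl(\lambda(r)\bigr)$ combined with the fact that the determinant of a stratified-algebraic $\CB$-vector bundle is again stratified-algebraic. The only difference is that the paper cites this last fact from \cite{bib30} (Proposition~3.15 there), whereas you verify it directly via the embedding $\Lambda^r\xi \hookrightarrow \varepsilon_X^N(\CB)$ stratum by stratum.
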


\begin{proof}
If the $\CB$-vector bundle $\lambda(r)$ admits a stratified-algebraic
structure, then so does the $\CB$-line bundle $\det \lambda(r)$, cf.
\cite[Proposition~3.15]{bib30}. Here $\det \lambda(r)$ stands for the
$r$th exterior power of $\lambda(r)$. The proof is complete since the
$\CB$-line bundles $\det \lambda(r)$ and $\lambda^{\otimes r}$ are
isomorphic.
\end{proof}

As a consequence, we obtain the following.

\begin{corollary}\label{cor-3-2}
Let $X$ be a compact real algebraic variety. If $r$ is a positive
integer and $r\GammaC(X)=0$, then $r\GammaC^1(X)=0$.
\end{corollary}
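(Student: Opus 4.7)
The plan is to chain together Proposition~\ref{prop-1-2} and Proposition~\ref{prop-3-1}, using only that an arbitrary topological $\CB$-line bundle has constant rank~$1$, so nothing new needs to be built.

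First I would fix an arbitrary topological $\CB$-line bundle $\lambda$ on $X$ and observe that, being of constant rank, it qualifies as an input to part~(\ref{prop-1-2-c}) of Proposition~\ref{prop-1-2}. The hypothesis $r\GammaC(X)=0$ gives condition~(\ref{prop-1-2-a}) of that proposition (note that $r\GammaC(X)=0$ automatically forces $\GammaC(X)$ to be finite, since $\GammaC(X)$ is a finitely generated abelian group each of whose elements is annihilated by~$r$). Invoking the implication (\ref{prop-1-2-a})$\Rightarrow$(\ref{prop-1-2-c}) therefore yields that the $r$-fold direct sum $\lambda(r)$ admits a stratified-algebraic structure.

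Next I would apply Proposition~\ref{prop-3-1} directly: it converts the stratified-algebraic structure on the direct sum $\lambda(r)$ into a stratified-algebraic structure on the tensor power $\lambda^{\otimes r}$. Since $\lambda$ was arbitrary, this means that $[\lambda^{\otimes r}]$ is the identity element of $\GammaC^1(X)=\VBC^1(X)/\VBCstr^1(X)$ for every $\lambda$, which is precisely the statement $r\GammaC^1(X)=0$.

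There is really no hard step here: the content of the corollary lies entirely in the two propositions it quotes, and the role of this proof is merely to thread the quantifiers together correctly. The only point requiring a moment's care is the observation above that the hypothesis $r\GammaC(X)=0$ already entails finiteness of $\GammaC(X)$, so that Proposition~\ref{prop-1-2} genuinely applies; once that is in place, the remainder is a one-line deduction.
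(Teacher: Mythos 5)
Your argument is correct and is exactly the paper's proof, which simply cites Propositions~\ref{prop-1-2} and~\ref{prop-3-1}; you have merely spelled out the quantifier-chasing. Your side remark that $r\GammaC(X)=0$ forces finiteness of $\GammaC(X)$ is also consistent with the paper, which notes earlier that $\GammaF(X)$ is finitely generated, hence finite if and only if it is annihilated by some positive integer.
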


\begin{proof}
It suffices to make use of Propositions~\ref{prop-1-2}
and~\ref{prop-3-1}.
\end{proof}

\begin{corollary}\label{cor-3-3}
For any compact real algebraic variety $X$ of dimension at most $8$, the
group $\GammaC^1(X)$ is finite and
\begin{equation*}
2^{a(\dim X, \CB) + a(X)} \GammaC^1(X) = 0,
\end{equation*}
where $a(X)=0$ if $\dim X \leq 7$ and $a(X) = 2$ if $\dim X = 8$.
\end{corollary}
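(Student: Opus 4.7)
The proof should be an essentially immediate combination of Theorem~\ref{th-1-6} (applied with $\F=\CB$) and Corollary~\ref{cor-3-2}. The plan is as follows.

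First, I would invoke Theorem~\ref{th-1-6} with $\F=\CB$: for the compact real algebraic variety $X$ with $\dim X \leq 8$, the group $\GammaC(X)$ is finite and satisfies
\begin{equation*}
2^{a(\dim X, \CB) + a(X)} \GammaC(X) = 0,
\end{equation*}
with $a(X)$ as defined in the statement.

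Next, setting $r = 2^{a(\dim X, \CB) + a(X)}$, Corollary~\ref{cor-3-2} immediately gives
\begin{equation*}
r \GammaC^1(X) = 0.
\end{equation*}
To conclude that $\GammaC^1(X)$ is itself finite (and not merely of bounded exponent), I would recall the remark made at the start of Section~\ref{sec-3}: since $X$ has the homotopy type of a compact polyhedron, $\VBC^1(X) \cong H^2(X;\Z)$ is finitely generated, so the quotient $\GammaC^1(X)$ is finitely generated as well. A finitely generated abelian group annihilated by a positive integer $r$ is finite, which yields the desired conclusion.

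There is essentially no obstacle here; the corollary is a formal consequence of the two previously established results, and the only mild subtlety is pointing out finiteness (as opposed to merely bounded torsion), which follows from the finite generation of $\VBC^1(X) \cong H^2(X;\Z)$ already recorded in the section.
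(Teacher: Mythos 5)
Your proof is correct and is exactly the paper's argument: Theorem~\ref{th-1-6} (with $\F=\CB$) combined with Corollary~\ref{cor-3-2}, with finiteness following from the finite generation of $\GammaC^1(X)$ already noted at the start of Section~\ref{sec-3}. No issues.
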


\begin{proof}
This follows from Theorem~\ref{th-1-6} and Corollary~\ref{cor-3-2}.
\end{proof}

A different proof of Corollary~\ref{cor-3-3} for varieties of dimension
at most $5$ is given in \cite{bib29}. It is plausible that
$2\GammaC^1(X)=0$ for every compact real algebraic variety $X$, cf.
\cite[Conjecture~B, Proposition~1.5]{bib29}. This is confirmed by
Corollary~\ref{cor-3-3} for $\dim X \leq 4$. Without restrictions on the
dimension of $X$ we have the following.

\begin{theorem}\label{th-3-4}
Let $X$ be a compact real algebraic variety with $\Hsph^2(X;\Z) =
H^2(X;\Z)$. Then the group $\GammaC^1(X)$ is finite and
$2\GammaC^1(X)=0$.
\end{theorem}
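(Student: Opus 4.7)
The plan is to use that $\CB$-line bundles are classified by their first Chern class, reducing the theorem to showing that $\lambda^{\otimes 2}$ admits a stratified-algebraic structure for every topological $\CB$-line bundle $\lambda$ on $X$. Finiteness of $\GammaC^1(X)$ will then follow from the finite generation already recorded earlier in this section, together with the resulting equality $2\GammaC^1(X)=0$. The strategy is to factor $\lambda$ as a tensor product of $\CB$-line bundles pulled back from $\SB^2$, and to handle each factor by combining Theorem~\ref{th-2-13} (with $k=1$) with Proposition~\ref{prop-3-1}.

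Concretely, I would first fix a topological $\CB$-line bundle $\theta$ on $\SB^2$ with $c_1(\theta)=s_2$. Using the hypothesis $\Hsph^2(X;\Z)=H^2(X;\Z)$, the class $c_1(\lambda)$ is a $\Z$-linear combination of spherical classes $h^*(s_2)$. Because every integer is the degree of some self-map of $\SB^2$ (for instance $z\mapsto z^n$ on $\SB^2=\CB\PB^1$, together with an antipodal self-map to realize the sign $-1$), I can absorb the integer coefficients into the maps and write
\begin{equation*}
c_1(\lambda)=\sum_{i=1}^{m}h_i^*(s_2)
\end{equation*}
for suitable continuous maps $h_i\colon X\to\SB^2$. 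Since $\CB$-line bundles are classified by the first Chern class, this gives $\lambda\cong h_1^*\theta\otimes\cdots\otimes h_m^*\theta$, and consequently
\begin{equation*}
\lambda^{\otimes 2}\cong(h_1^*\theta)^{\otimes 2}\otimes\cdots\otimes(h_m^*\theta)^{\otimes 2}.
\end{equation*}

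For each index $i$, Theorem~\ref{th-2-13} applied with $\F=\CB$, $k=1$, bundle $\theta$, and map $h_i$ shows that $(h_i^*\theta)(2)=h_i^*\theta\oplus h_i^*\theta$ admits a stratified-algebraic structure, and Proposition~\ref{prop-3-1} with $r=2$ then upgrades this to a stratified-algebraic structure on $(h_i^*\theta)^{\otimes 2}$. Since the tensor product of finitely many stratified-algebraic $\CB$-line bundles is again stratified-algebraic (because $\mu\otimes\nu\cong\det(\mu\oplus\nu)$ for line bundles, and both direct sums and determinants of stratified-algebraic bundles are stratified-algebraic by \cite{bib30}), it will follow that $\lambda^{\otimes 2}$ admits a stratified-algebraic structure. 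The one delicate point is that the set of spherical classes need not be closed under addition, so $\lambda$ itself generally is not a single pullback $h^*\theta$; the argument works because the tensor factorization of $\lambda$ mirrors the sum decomposition of $c_1(\lambda)$, after which the remaining steps are a direct assembly of the earlier results.
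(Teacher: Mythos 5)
Your argument is correct, but it takes a genuinely different route from the paper's. The paper's proof is a two-step cohomological argument: Lemma~\ref{lem-2-12} with $k=1$ gives $2\Hsph^2(X;\Z)\subseteq\HCstr^2(X;\Z)$, hence $2H^2(X;\Z)\subseteq\HCstr^2(X;\Z)$ under the hypothesis, so $c_1(\lambda^{\otimes 2})=2c_1(\lambda)$ lies in $\HCstr^2(X;\Z)$ for every topological $\CB$-line bundle $\lambda$; it then invokes \cite[Proposition~8.6]{bib30}, which asserts that a $\CB$-line bundle whose first Chern class is stratified-$\CB$-algebraic admits a stratified-algebraic structure. You bypass that external criterion entirely: you decompose $c_1(\lambda)$ as a sum of spherical classes (absorbing the integer coefficients into self-maps of $\SB^2$ of arbitrary degree), realize $\lambda$ as a tensor product of pullbacks $h_i^*\theta$, and then treat each factor with Theorem~\ref{th-2-13} (for the direct sum) and Proposition~\ref{prop-3-1} (to pass to the tensor square), finishing with closure of stratified-algebraic $\CB$-line bundles under tensor product via $\mu\otimes\nu\cong\det(\mu\oplus\nu)$ and \cite[Proposition~3.15]{bib30}. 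Both proofs ultimately rest on the same degree-$2$ self-map mechanism (Lemma~\ref{lem-2-8} via Lemma~\ref{lem-2-11}), since Theorem~\ref{th-2-13} and Lemma~\ref{lem-2-12} are both derived from it; the paper's version is shorter because the cohomological criterion of \cite{bib30} does the bundle-reconstruction work for it, while yours is more self-contained within Sections~\ref{sec-2} and~\ref{sec-3} and makes the geometric mechanism explicit. Your closing remark that spherical classes need not be closed under addition correctly identifies why the tensor factorization, rather than a single pullback, is needed.
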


\begin{proof}
According to Lemma~\ref{lem-2-12},
\begin{equation*}
2H^2(X;\Z) \subseteq \HCstr^2(X;\Z).
\end{equation*}
Hence for any topological $\CB$-line bundle $\lambda$ on $X$, the Chern
class $c_1(\lambda^{\otimes 2}) = 2c_1(\lambda)$ is in $\HCstr^2(X;\Z)$.
In view of \cite[Proposition~8.6]{bib30}, the $\CB$-line bundle
$\lambda^{\otimes 2}$ admits a stratified-algebraic structure. Thus $2
\GammaC^1(X) = 0$, as asserted.
\end{proof}

The following special case is of interest.

\begin{corollary}\label{cor-3-5}
Let $X$ be a compact real algebraic variety. If each connected component
of $X$ is homotopically equivalent to $\SB^{d_1} \times \cdots \times
\SB^{d_n}$ for some positive integers $d_1, \ldots, d_n$, then the group
$\GammaC^1(X)$ is finite and $2\GammaC^1(X)=0$.
\end{corollary}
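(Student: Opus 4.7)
The plan is to verify the single hypothesis of Theorem~\ref{th-3-4}, namely that $\Hsph^2(X;\Z) = H^2(X;\Z)$; the conclusion $2\GammaC^1(X)=0$ will then be immediate. Both groups split as direct sums over connected components and are invariants of homotopy type, so I would first reduce to the case where $X$ equals the product $Y = \SB^{d_1} \times \cdots \times \SB^{d_n}$ itself.

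Next I would analyze $H^2(Y;\Z)$ via the Künneth formula, whose generators are cup products of pullbacks of the fundamental classes $s_{d_i}$ along the projections $p_i \colon Y \to \SB^{d_i}$. In degree $2$ the only contributions are of two kinds: a single class $p_i^*(s_2)$ when $d_i = 2$, and a cup product $p_i^*(s_1) \cupproduct p_j^*(s_1)$ when $d_i = d_j = 1$. The classes of the first kind are spherical by definition, so the interesting point is the second kind.

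For these I would rewrite $p_i^*(s_1) \cupproduct p_j^*(s_1) = (p_i, p_j)^*(s_1 \times s_1)$, where $s_1 \times s_1 \in H^2(\SB^1 \times \SB^1; \Z)$ denotes the exterior product. The key input, and the only step that really requires justification, is that $s_1 \times s_1$ is itself spherical: the quotient map $q \colon \SB^1 \times \SB^1 \to (\SB^1 \times \SB^1)/(\SB^1 \vee \SB^1) \cong \SB^2$ pulls $s_2$ back to $s_1 \times s_1$. Once this is in place, every generator of $H^2(Y;\Z)$ lies in $\Hsph^2(Y;\Z)$, hence $\Hsph^2(X;\Z) = H^2(X;\Z)$, and Theorem~\ref{th-3-4} finishes the argument. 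I do not foresee any genuine obstacle; the torus-collapse observation is the one point deserving explicit mention.
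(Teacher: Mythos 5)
Your proposal is correct and follows exactly the paper's route: the paper's entire proof is the observation that $\Hsph^2(X;\Z)=H^2(X;\Z)$ followed by an appeal to Theorem~\ref{th-3-4}. You merely supply the K\"unneth/torus-collapse justification of that equality (which the paper leaves implicit, as it does in Corollary~\ref{cor-1-5}), and that justification is sound.
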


\begin{proof}
Since $\Hsph^2(X;\Z) = H^2(X;\Z)$, it suffices to apply
Theorem~\ref{th-3-4}.
\end{proof}

According to \cite[Example~7.10]{bib30}, there exists a nonsingular real
algebraic variety $X$ diffeomorphic to the $n$-fold product $\SB^1
\times \cdots \times \SB^1$, $n \geq 3$, with $\GammaC^1(X) \neq 0$.

%\cleardoublepage
\phantomsection
\addcontentsline{toc}{section}{\refname}
\nocite{*}
\printbibliography

\end{document}